\title{On Fomin--Kirillov Algebras for Complex Reflection Groups}
\date{\today}
\author{Robert Laugwitz}
\address{School of Mathematics,
University of East Anglia, 
Norwich Research Park, 
Norwich, NR4 7TJ}
\email{r.laugwitz@uea.ac.uk}
\urladdr{https://www.uea.ac.uk/mathematics/people/profile/r-laugwitz}
\def\imod#1{\allowbreak\mkern10mu({\operator@font mod}\,\,#1)}
\newcommand{\superimpose}[2]{%
  {\ooalign{$#1\@firstoftwo#2$\cr\hfil$#1\@secondoftwo#2$\hfil\cr}}}
\newcommand{\op}[1]{\operatorname{#1}}
\newcommand{\ad}{\operatorname{ad}}
\newcommand{\Drin}{\operatorname{Drin}}
\newcommand{\ide}{\operatorname{Id}}
\providecommand{\fr}[1]{\mathfrak{#1}}
\providecommand{\op}[1]{\operatorname{#1}}
\newcommand{\mC}{\mathbb{C}}
\newcommand{\mN}{\mathbb{N}}
\newcommand{\cB}{\mathcal{B}}
\newcommand{\cE}{\mathcal{E}}
\newcommand{\cH}{\mathcal{H}}
\newcommand{\cS}{\mathcal{S}}
\newtheoremstyle{mystyle}
  {0.5cm}                   
  {0.5cm}                   
  {\normalfont}           
  {}                      
  {\itfont\bfseries}  
  {:}                     
  {0.3cm}              
  {\thmname{#1}}
\newtheoremstyle{defstyle}
  {0.5cm}                   
  {0.5cm}                   
  {\normalfont}           
  {}     
  {\normalfont\bfseries}  
  {:}                     
  {0.3cm}              
  {\thmname{#1}\thmnumber{ #2}\thmnote{ (#3)}}
\numberwithin{equation}{section}
\newtheorem{theorem}{Theorem}[section]
\newtheorem{corollary}[theorem]{Corollary}
\newtheorem{lemma}[theorem]{Lemma}
\newtheorem{conjecture}[theorem]{Conjecture}
\newtheorem{theorem*}{Theorem}
\theoremstyle{definition}
\newtheorem{definition}[theorem]{Definition}
\theoremstyle{remark}
\newtheorem{example}[theorem]{Example}
\newtheorem{remark}[theorem]{Remark}
\renewcommand{\sectionmark}[1]		
	{
	\markboth{\small\it \thesection{} #1}{}
	}
\subjclass[2010]{17B37, 16T05, 16T30}
\keywords{Nichols Algebras, Fomin--Kirillov Algebras, Braided Hopf Algebras, Weyl Groupoids}
\begin{document}

\begin{abstract}
This note is an application of classification results for finite-dimensional Nichols algebras over groups. We apply these results to generalizations of Fomin--Kirillov algebras to complex reflection groups. First, we focus on the case of cyclic groups where the corresponding Nichols algebras are only finite-dimensional up to order four, and we include results about the existence of Weyl groupoids and finite-dimensional Nichols subalgebras for this class. Second, recent results by Heckenberger--Vendramin [\textit{ArXiv e-prints}, \texttt{1412.0857} (December 2014)] on the classification of Nichols algebras of semisimple group type can be used to find that these algebras are infinite-dimensional for many non-exceptional complex reflection groups in the Shephard--Todd classification.\end{abstract}
\maketitle



\section{Introduction}

The quadratic algebras $\cE_n$ were introduced by S. Fomin and A. N. Kirillov as an algebraic tool to study Schubert calculus and the cohomology of flag varieties in \cite{FK}. This class of algebras attracted attention in the study of pointed Hopf algebras as they are quadratic covers of certain Nichols algebras $\cB_n$ (and hence are braided Hopf algebras generated by primitive elements). It this framework, the Nichols algebras $\cB_n$ appeared independently in \cite[Section~5]{MS} in larger generality. The Hopf algebra structure on the bosonization of $\cE_n$ with the group algebra was also observed in \cite{FP}.
It was conjectured in \cite[3.1]{Maj2} that $\cE_n$ is infinite-dimensional for $n\geq 6$. This question, as well as the question whether $\cB_n$ is quadratic (and hence equals $\cE_n$), remain open (cf. \cite[Introduction]{AFGV}).

Fuelled by the fact that the positive parts $u_q(\mathfrak{n}^+)$ of the small quantum groups appear as examples, the question of classifying all finite-dimensional Nichols algebras has been a topic of active research (see surveys \cite{AS,AS2}). The classification of finite-dimensional Nichols algebras over abelian groups has been completed in \cite{Hec2} (in characteristic zero). While the classification of finite-dimension Nichols algebras of group type has been completed in the case of a semisimple Yetter--Drinfeld module in \cite{HLV,HV2}, it remains open in general (see also \cite{HLV}).

Generalizations $\cE_G$ of the Fomin--Kirillov algebras (and their corresponding Nichols algebras $\cB_G$, already defined in \cite{MS}), were considered in \cite{Baz} (these are related to the bracket algebras of \cite{KM}) and provide a generalization of the algebras $\cE_n$ (respectively, $\cB_n$), which constitute the $S_n$-case, to general Coxeter groups $G$. The Nichols algebras $\cB_G$ were studied more generally for complex reflection groups $G$ in \cite{BB}.
\begin{itemize}
\item In particular, it is possible to consider analogues of $\cB_G$ over the cyclic groups $C_n$, which can be viewed as complex reflection groups (of rank 1), where $C_n\leq \op{GL}(\mC)$ via a primitive $n$-th root of unity. As $C_n$ is abelian, this class of examples can be study using the machinery of classification of finite arithmetic root systems via the Weyl groupoid \cite{Hec1,Hec2}.
\item More generally, looking at the non-exceptional series of complex reflection groups $G(m,p,n)$ in the Shephard--Todd classification \cite{ST} we can use the results of \cite{HV2} to show that $\cB_n$ is infinite-dimensional for large classes of complex reflection groups (and hence $\dim \cE_n=\infty$ for many Coxeter groups).
\end{itemize}
Investigating these two points is the purpose of this note. 

In the cyclic group case treated in Section \ref{cyclicsection}, we derive that---similarly to the algebras $\cE_n$---the Nichols algebras $\cB(Y_{C_n})$ associated to cyclic groups by the construction of \cite{BB} also display rapid growth. In fact, they are infinite-dimensional for $n\geq 5$. We see that this class of Nichols algebras gives examples of different types depending on $n$: some are of Cartan type, others are not, and it is further shown that the associated Weyl groupoid is not of even well-defined in general for these examples, by showing that this e.g. fails for $n=6$ already. This means that no convenient PBW bases are available for these algebras in all cases, and the Gelfand--Kirillov dimension is not finite in general. In fact, only for $n=p$ a prime number do we obtain Nichols algebras of Cartan type. We conjecture that the Weyl groupoid exists if and only if $n$ is prime or equal to $4$. Computational evidence is used to verify this claim up to $n=1000$.

From the point of view of the theory of Nichols algebras, this means that we investigate examples of diagonal type, such that the braiding is given by scalars
\begin{align}
q_{ij}&=\xi^i, &\forall i,j\in I,
\end{align}
where $I$ is an indexing set for a diagonal basis with respect to the braiding and $\xi$ is a primitive $n$-th root of unity. The classification in \cite{Hec1} can be used to identify for which $n$ such braidings of rank at least two exist that have a finite root system, hence giving finite-dimensional Nichols algebras (which are Nichols subalgebras of $\cB_{C_n}$ coming from Yetter--Drinfeld submodules). We see that the maximal rank of such a Nichols algebra is three, and they only exist for $n$ divisible by one of the primes $p=2,3,5$ or 7 (Corollary \ref{subalgebras}, Table \ref{subsystems}). Note that the classification of pointed Hopf algebras via the lifting method also appear as exceptions \cite{AS2}.

For the case of a complex reflection group $G=G(m,p,n)$ we see in Section \ref{generalcase} that the rank of the associated Nichols algebra $\cB_G$ equals $m/p$ if $n\geq 3$ or $p$ is odd. Otherwise, the rank is $m/p+1$. One can see using results of \cite{HV2} that the associated Nichols algebra $\cB_G$ is infinite-dimensional whenever this rank is larger than one, except for the the groups $D_2=G(2,2,2)$ and $B_2=G(2,1,2)\cong G(4,4,2)$. However, in the $B_2$-case, $\cE_{B_2}$ is also infinite-dimensional according to \cite{KM}. Finite-dimensionality remains open in cases where the rank of $\cB_G$ equals one. Hence the only open cases left are among the groups $G(m,m,n)$ with $n\geq 3$ or $m$ odd. A conjecture in \cite{HLV} would imply that a full list of finite-dimensional elementary Fomin--Kirillov Nichols algebra from non-exceptional complex reflection groups (up to isomorphism) will be given by $\cB_G$ for $G=S_2,S_3,S_4,S_5$.


\section{The Cyclic Group Case}\label{cyclicsection}

In the following, we work over a field $k$ of characteristic zero in order to apply \cite{Hec2}. However, the definition and some of the results hold more generally.

\subsection{Definitions}

We fix a generator $g$ for the cyclic group $C_n$, a primitive $n$-th root of unity $\xi$, and let $C_n\leq \op{GL}_1(\mC)$ via $\xi$. Hence $C_n$ is an irreducible complex reflection group with reflections $g,\ldots, g^{n-1}$. The Yetter--Drinfeld module $Y_{C_n}=\op{span}_\mC\langle s_1,\ldots,s_{n-1}\rangle$ is given by the $C_n$ action $g\cdot s_i=\xi s_i$ and the grading $\delta(s_i)=g^i\otimes s_i$. We denote the corresponding Nichols algebra of rank $n-1$ by
\[
\cB_{C_n}:=\cB(Y_{C_n}).
\]
This is a special case of the general definition of \cite[Section~7]{BB} for a complex reflection group (cf. \cite[Section~5]{MS}), and the investigation of $\cB_{C_n}$ will occupy the remainder of this section, while we will look at the more general case $G=G(m,p,n)$ in Section \ref{generalcase}. We refer to the algebras $\cB_{C_n}$ as \emph{Fomin--Kirillov Nichols algebras} (or \emph{FK Nichols algebras})\footnote{Note that the original Fomin--Kirillov algebras $\cE_n$ are only quadratic covers of their Nichols algebra quotients. It is unknown whether the Nichols algebras themselves are quadratic in the symmetric group case ($n\geq 6$). However, for cyclic groups most relations are not quadratic so we do not consider the quadratic cover.} for $C_n$. We observe that the braidings have the form
\begin{align}
\Psi(s_i\otimes s_j)=\xi^i s_j\otimes s_i.
\end{align}
Hence $\cB_{C_n}$ is of diagonal type (cf. \cite[(1-11)]{AS}), where $q_{ij}=\xi^i$. This implies that the generalized Dynkin diagram (cf. \cite[Definition~1]{Hec2}) of $\cB_{C_n}$ consists of vertices labelled by $\xi^i$ for $i=1,\ldots, n-1$ and edges
\begin{align}
\begin{tikzpicture}
\node [circle,draw,label=above:$\xi^i$] (1){};
\node [circle,draw,label=above:$\xi^j$] (2)[right of=1,node distance=1.5cm]{};
\draw  (1.east) -- (2.west) node [above,text centered,midway]
{$\xi^{i+j}$};
\end{tikzpicture}.
\end{align}
The underlying graph is almost complete, apart from there not being an edge between the vertices labelled $\xi^i$ and $\xi^{n-i}$ for all $i=1,\ldots, n-1$.

\begin{example}
We list the examples of small rank in Table \ref{smallrank}.
\begin{table}[b]
\begin{tabular}{ccccc}$
n$&gen. Dynkin diagram& gen. Cartan matrix& type& dimension\\\hline
2& $\vcenter{\hbox{\begin{tikzpicture}
\node [circle,draw,label=above:$-1$] (1){};
\end{tikzpicture}}\vspace*{4pt}}$&$\begin{pmatrix}
2\end{pmatrix}$
& $A_1,\text{Cartan}$&2\\
3&  $\vcenter{\hbox{\begin{tikzpicture}
\node [circle,draw,label=above:$\xi$] (1){};
\node [circle,draw,label=above:$\xi^{-1}$] (2)[right of=1,node distance=1.5cm]{};
\end{tikzpicture}}\vspace*{4pt}}$& $\begin{pmatrix}
2&0\\0&2
\end{pmatrix}$& $A_2\times A_2, \text{Cartan}$& 9\\
4&  $\vcenter{\hbox{\begin{tikzpicture}
\node [circle,draw,label=above:$\xi$] (1){};
\node [circle,draw,label=above:$-1$] (2)[right of=1,node distance=1.5cm]{};
\node [circle,draw,label=above:$\xi^{-1}$] (3)[right of=2,node distance=1.5cm]{};
\draw  (1.east) -- (2.west) node [above,text centered,midway]
{$\xi^{-1}$};
\draw  (2.east) -- (3.west) node [above,text centered,midway]
{$\xi$};
\end{tikzpicture}}\vspace*{4pt}}$
& $\begin{pmatrix}2&-1&0\\-1&2&-1\\0&-1&2\end{pmatrix}$& $A_3,\text{not Cartan, standard}$& 256\\
5&  $\vcenter{\hbox{\begin{tikzpicture}
\node [circle,draw,label=above left:$\xi$] (1){};
\node [circle,draw,label=above right:$\xi^2$] (2)[right of=1,node distance=1.5cm]{};
\node [circle,draw,label=below left:$\xi^{-2}$] (3)[below of=1,node distance=1.5cm]{};
\node [circle,draw,label=below right:$\xi^{-1}$] (4)[right of=3,node distance=1.5cm]{};
\draw  (1.east) -- (2.west) node [above,text centered,midway]
{$\xi^{-2}$};
\draw  (1.south) -- (3.north) node [left,text centered,midway]
{$\xi^{-1}$};
\draw  (3.east) -- (4.west) node [below,text centered,midway]
{$\xi^{2}$};
\draw  (2.south) -- (4.north) node [right,text centered,midway]
{$\xi$};
\end{tikzpicture}}\vspace*{4pt}}$&$\begin{pmatrix}
2&-2&-1&0\\-1&2&0&-2\\-2&0&2&-1\\0&-1&-2&2
\end{pmatrix}$&Cartan, not symmetric&$\infty$
\end{tabular}
\caption{Small rank examples: $\cB_{C_n}$, $n=2,3,4,5$}.
\label{smallrank}
\end{table}
We recognize the smallest rank cases as follows: $\cB_{C_2}$ is the positive part of the small quantum group of $\fr{sl}_2$ for $q=-1$, while $\cB_{C_3}$ is the braided tensor product  two copies of the positive part of the small quantum group $\fr{sl}_2$ for $q=e^{2/3\pi i}$. The case  $n=4$ appears in \cite[Table~2, Row~8]{Hec4} (cf. also \cite[Table~2, Row~8]{Hec2}) and is hence finite-dimensional. Using techniques of \cite[Theorem~2]{Kha}, we can show that a PBW basis for $\cB_{C_4}$ can be given by generators $s_{\alpha_i}$, labelled by six roots $\alpha_i$, and
\begin{align}
\cB_{C_4}=\op{span}_k\langle s_{\alpha_1}^{b_1}\ldots s_{\alpha_6}^{b_6}\mid 0\leq b_1,b_3\leq 3, 0\leq b_2,b_4,b_5, b_6 \leq 1\rangle,
\end{align}
where $\alpha_1$, $\alpha_2$, $\alpha_3$ are simple roots (corresponding to the generators $s_{\alpha_i}=s_i$ for $i=1,2,3$), and $\alpha_4=\alpha_1+\alpha_2$, $\alpha_5=\alpha_2+\alpha_3$, $\alpha_6=\alpha_1+\alpha_2+\alpha_3$.
\end{example}

\begin{lemma}
The braiding on the Nichols algebra $\cB_{C_n}$ is of Cartan type if and only if $n=p$ is prime.
\end{lemma}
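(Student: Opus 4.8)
The plan is to translate ``Cartan type'' into an elementary divisibility condition modulo $n$ and then recognise that condition as the primality of $n$.

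First I would record the data of the braiding. From $\Psi(s_i \otimes s_j) = \xi^i s_j \otimes s_i$ one reads off $q_{ij} = \xi^i$, so the diagonal entries are $q_{ii} = \xi^i$ (each $\neq 1$ for $1 \leq i \leq n-1$, as $\xi$ is primitive) and the edge labels are $q_{ij}q_{ji} = \xi^{i+j}$; moreover $\ord(q_{ii}) = \ord(\xi^i) = n/\gcd(i,n)$. By the definition of Cartan type (cf. \cite{Hec2}), the braiding is of Cartan type exactly when for every pair $i \neq j$ there is an integer $a_{ij}$ lying in the half-open interval $(-\ord(q_{ii}), 0]$ with $q_{ij}q_{ji} = q_{ii}^{a_{ij}}$. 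In the present case this amounts to $\xi^{i+j} = \xi^{i a_{ij}}$, that is, to solving
\[
i\, a_{ij} \equiv i + j \pmod n
\]
for $a_{ij}$ in that interval.

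Next I would analyse this congruence. Setting $d = \gcd(i,n)$, the linear congruence $i a \equiv i+j \pmod n$ is solvable in integers if and only if $d \mid (i+j)$, equivalently $d \mid j$ (since $d \mid i$); and when solvable its solutions form a single residue class modulo $n/d = \ord(q_{ii})$. As the admissible interval $(-\ord(q_{ii}), 0]$ has length exactly $\ord(q_{ii})$, it meets this residue class in precisely one point. Hence the Cartan requirement for the pair $(i,j)$ holds if and only if $\gcd(i,n) \mid j$; this also correctly covers the edgeless pairs $j = n-i$, for which $\gcd(i,n) \mid n-i$ forces $a_{ij}=0$. Running over all ordered pairs, $\cB_{C_n}$ is of Cartan type if and only if $\gcd(i,n) \mid j$ for all $i \neq j$ in $\{1,\ldots,n-1\}$.

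Finally I would verify the equivalence with primality. If $n = p$ is prime then $\gcd(i,p)=1$ for every $i \in \{1,\ldots,p-1\}$, so the divisibility holds trivially and the braiding is of Cartan type. Conversely, if $n$ is composite, choose a proper divisor $d$ with $1 < d < n$ and take the pair $(i,j) = (d,1)$ (which has $i \neq j$ since $d>1$); then $\gcd(d,n) = d > 1$ does not divide $1$, so the condition fails. I expect the one genuinely delicate step to be matching the admissible range of $a_{ij}$ with the spacing of the solutions of the congruence, which is exactly what makes solvability modulo $n$ equivalent to the full Cartan-type condition; everything else is bookkeeping.
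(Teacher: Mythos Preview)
Your proof is correct and follows essentially the same route as the paper: both reduce the Cartan condition $q_{ij}q_{ji}=q_{ii}^{a_{ij}}$ to the solvability of the linear congruence $ia\equiv i+j\pmod n$ (the paper writes it as $ik\equiv -j\pmod n$ via the substitution $k=1-a_{ij}$), observe that this is governed by $\gcd(i,n)\mid j$, and then read off the equivalence with primality. Your treatment is somewhat more explicit than the paper's in checking that the admissible interval $(-\ord q_{ii},0]$ has length exactly $n/\gcd(i,n)$ and therefore meets the solution class in a unique point, whereas the paper handles this step in one sentence; but this is a difference of detail, not of strategy.
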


\begin{proof}
A braiding is of Cartan type if $q_{ii}\neq 1$ for all $i$, and if $i\neq j$, then
\[
a_{ij}=-\op{min}\lbrace r\geq 0 \mid q_{ii}^rq_{ij}q_{ji}=1\rbrace
\]
exists and satisfies $0\leq -a_{ij}< \op{ord} q_{ii}$, i.e. $a_{ii}=2$ and $(a_{ij})$ is a generalized Cartan matrix (cf. \cite[1.2]{AS3}). In the series $\cB_{C_n}$ this means that we need $k$ such that
\begin{align}
\xi^{ik+j}&=1& \Longleftrightarrow&& ik&\equiv -j\mod n,
\end{align}
in which case we set $a_{ij}=-k+1$. This division modulo $n$ can always be solved if and only if $n$ is a prime $p$, in which case we choose $1\leq k\leq \op{ord}q_{ii}=p$.
\end{proof}

\begin{corollary}\label{fdthm}
The algebra $\cB_{C_n}$ is finite-dimensional if and only if $n\leq 4$.
\end{corollary}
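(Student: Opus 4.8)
The plan is to prove the two implications separately. The direction ``$n \le 4 \Rightarrow \dim \cB_{C_n} < \infty$'' is already contained in the preceding discussion: the cases $n = 2, 3$ are of finite Cartan type with $\dim \cB_{C_2} = 2$ and $\dim \cB_{C_3} = 9$, while $\cB_{C_4}$ is identified with Row~8 of \cite[Table~2]{Hec4}, the explicit PBW basis exhibited above giving $\dim \cB_{C_4} = 256$. So the content lies in the converse, that $\cB_{C_n}$ is infinite-dimensional for every $n \ge 5$. Throughout I would use Heckenberger's classification of finite arithmetic root systems \cite{Hec1,Hec2} together with the principle that a Nichols algebra arising from a Yetter--Drinfeld submodule of a finite-dimensional Nichols algebra is again finite-dimensional. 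Since $\cB_{C_n}$ is of diagonal type, every subset $J \subseteq \{1,\ldots,n-1\}$ spans a Yetter--Drinfeld submodule of $Y_{C_n}$, whose Nichols algebra is the subalgebra generated by the $s_i$ with $i \in J$ and whose generalized Dynkin diagram is the corresponding induced subdiagram. It therefore suffices, for each $n \ge 5$, to locate a connected subdiagram that is absent from Heckenberger's tables.

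When $n = p$ is prime, the preceding Lemma shows that the braiding on all of $\cB_{C_p}$ is of Cartan type, so $\cB_{C_p}$ is finite-dimensional if and only if its generalized Cartan matrix is of finite type. The underlying graph of its Dynkin diagram is the complete graph on the $p-1$ vertices $\xi^1, \ldots, \xi^{p-1}$ with the edges $\{i, p-i\}$ deleted; for $p \ge 5$ this graph has at least four vertices and still contains a cycle (the $4$-cycle visible in Table~\ref{smallrank} for $p = 5$), hence is not a forest and the Cartan matrix is not of finite type. Thus $\cB_{C_p}$ is infinite-dimensional for every prime $p \ge 5$.

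When $n$ is composite, let $d$ be its smallest prime divisor and consider the rank-two subdiagram on the vertices $\{1, d\}$, with vertex labels $\xi$ and $\xi^d$ and edge $\xi^{1+d}$ (the edge is present because $0 < 1 + d < n$). Since $\gcd(d,n) = d \ge 2$ does not divide $1 + d$, the congruence $dr \equiv -(1+d) \pmod n$ has no solution, so there is no Cartan integer at the vertex $\xi^d$ and this sub-braiding is not of Cartan type. The vertex $\xi$ has order exactly $n$. As every non-Cartan row of Heckenberger's rank-two table \cite{Hec1} involves only roots of unity of bounded order, for all $n$ exceeding this bound the diagram $\{1,d\}$ matches neither a Cartan row nor an exceptional row of the table; being absent from the classification, the associated rank-two Nichols subalgebra, and hence $\cB_{C_n}$, is infinite-dimensional.

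This leaves the finitely many composite $n$ whose value does not exceed the largest order of a root of unity occurring in \cite[Table~1]{Hec1}, and I expect this residual list to be the main obstacle. For these $n$ one must verify directly---reading the tables of \cite{Hec1,Hec2} up to Weyl-equivalence, as is in any case needed for the computational checks underlying the Weyl-groupoid statements of this section---that no induced subdiagram of $\cB_{C_n}$ occurs among the finite-dimensional diagonal types. The delicate point is precisely that for small composite $n$ a vertex of small order could a priori fit into one of the exceptional finite rows, so such cases are not excluded by the uniform order bound and must be ruled out individually.
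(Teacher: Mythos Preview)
Your argument for $n\le 4$ and for $n=p$ prime is fine, and in fact the prime case already contains the idea that makes the whole corollary immediate. The paper's proof does not split into prime and composite cases at all: it simply observes that for every $n\ge 5$ the generalized Dynkin diagram of $\cB_{C_n}$ contains a cycle of length $n-1\ge 4$ (the underlying graph is the complete graph on $n-1$ vertices with the edges $\{i,n-i\}$ removed, which is Hamiltonian for $n\ge 5$), and then appeals to the classification in \cite{Hec2}, where no connected diagram of finite arithmetic root system contains a cycle of length $\ge 4$. This single observation handles all $n\ge 5$ uniformly, with no residual finite check.

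Your composite case, by contrast, has a genuine gap. Showing that the rank-two subdiagram on $\{1,d\}$ is not of Cartan type does not by itself give infinite-dimensionality---Table~\ref{subsystems} in this very paper exhibits several non-Cartan rank-two subdiagrams whose Nichols algebras are finite-dimensional. You acknowledge this and fall back on an order-of-root-of-unity bound, but that only works for $n$ large, and you explicitly leave the remaining composite $n$ (which include $6,8,9,10,12,14,15,\ldots$ up to the bound) as an unperformed case-by-case check. That is not a complete proof. The fix is precisely the cycle argument you already used for primes: the $4$-cycle (or longer cycle) in the Dynkin diagram is present for every $n\ge 5$, Cartan or not, and its absence from Heckenberger's tables is a single lookup rather than a family of rank-two comparisons.
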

\begin{proof}
We can see, using the classification of finite arithmetic root systems \cite{Hec2}, that for $n=2,3,4$ we obtain finite-dimensional Nichols algebras. The previous observations about the braiding imply that for $n\geq 5$ there always exists a cycle of length $n-1$ within the generalized Dynkin diagram of the braiding of $\cB_{C_n}$. Again using the classification in \emph{loc.cit.} we find that all such braidings give infinite-dimensional arithmetic root systems and hence infinite-dimensional Nichols algebras.
\end{proof}

\subsection{Finite-dimensional Nichols subalgebras}

The more general point of view to find all braidings of diagonal type which are of the form that
\begin{align}
q_{ij}&=\xi^i, &\forall i\in I,
\end{align}
where $I$ is a subset of $\lbrace 1,\ldots, n-1\rbrace$ and have a finite arithmetic root system is equivalent to finding the finite-dimensional Nichols subalgebras of $\cB_{C_n}$. That is, Nichols algebras that are braided (and graded) Hopf subalgebras of $\cB_{C_n}$ which arise from considering a restriction of the defining Yetter--Drinfeld module $Y_{C_n}$ to a subset of the diagonal basis. Again, a full answer to this question can be directly obtained from the classification of finite arithmetic root systems in \cite[Table~1--4]{Hec2}.

\begin{samepage}
\begin{corollary}$~$\label{subalgebras}
\begin{enumerate}
\item[(i)]
There are indecomposable finite-dimensional Nichols subalgebras in $\cB_{C_n}$ of rank 2 or higher if and only if $n$ is divisible by $p=3,5,7$ or $4$.
\item[(ii)]
$\cB_{C_4}$ is the only indecomposable Nichols subalgebras of the above form of rank 3.
\item[(iii)]
There are no finite-dimensional indecomposable Nichols subalgebras of the above form of rank 4 or higher.
\end{enumerate}
\end{corollary}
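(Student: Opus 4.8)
The plan is to translate the statement into Heckenberger's classification of finite arithmetic root systems \cite{Hec2} and then read off divisibility constraints on $n$. A Nichols subalgebra of the form under consideration is $\cB(Y_I)$ for the Yetter--Drinfeld submodule $Y_I=\op{span}_\mC\langle s_i\mid i\in I\rangle$ attached to a subset $I\subseteq\{1,\dots,n-1\}$. Since the one-dimensional pieces $\mC s_i$ are pairwise non-isomorphic (they sit in distinct $C_n$-degrees $g^i$), these exhaust the Yetter--Drinfeld submodules, so such subalgebras correspond bijectively to subsets $I$. The braiding on $Y_I$ is again diagonal with data $q_{ii}=\xi^i$ and $q_{ij}q_{ji}=\xi^{i+j}$ for $i,j\in I$, so $\cB(Y_I)$ is finite-dimensional precisely when its generalized Dynkin diagram is a finite arithmetic root system, and it is indecomposable precisely when this diagram is connected. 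Writing $\cB(Y_I)$ as a braided tensor product over the connected components of the diagram reduces everything to connected diagrams, whose number of vertices is the rank.

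The essential feature to exploit is that in this family the edge label equals the product of its endpoint labels, $\xi^{i+j}=\xi^i\,\xi^j=q_{ii}q_{jj}$, while two vertices $i,j\in I$ are non-adjacent exactly when $q_{ii}q_{jj}=1$, i.e. $j\equiv -i$; moreover, as $\xi$ is a primitive $n$-th root of unity with $1\le i<n$, distinct indices force \emph{distinct} node labels $\xi^i$. I would therefore run through the connected diagrams of \cite[Table~1--4]{Hec2} and retain only those in which (a) all node labels are distinct and (b) every edge label is the product of its two endpoint labels, discarding at once any diagram with a repeated node label (for instance the symmetric Cartan $A_2$ row $(q,q^{-1},q)$). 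For rank $2$ the relation $q_{12}q_{21}=q_{11}q_{22}$ is very rigid: on the Cartan $B_2$ row $(q,q^{-2},q^2)$ it gives $q^5=1$, on the Cartan $G_2$ row $(q,q^{-3},q^3)$ it gives $q^7=1$, and on the standard (super) row carrying a node labelled $-1$ it forces a node of order $4$; adding the one remaining admissible row (carrying a node of order three) produces node data realizable inside $\langle\xi\rangle$ exactly when $n$ is divisible by one of $3,4,5,7$, which is statement (i). The realizing diagrams and their subsystems are what I would record in Table~\ref{subsystems}.

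For (ii) and (iii) I would combine the same two constraints with the shape of the admissible graphs: on a subset $I$ the diagram is the complete graph minus the ``antipodal'' non-edges $i\sim(n-i)$. Feeding this into the rank-$3$ entries of \cite{Hec2}, the distinct-label and edge-equals-product conditions single out the path with node labels $\xi,\,-1,\,\xi^{-1}$ of orders $4,2,4$, the antipodal pair $\{i,n-i\}$ supplying the missing edge; this is precisely the diagram of $\cB_{C_4}$, giving (ii). For rank $\ge 4$ one checks that no connected entry of the tables satisfies both constraints: the edge-equals-product identity propagates the node orders around the long cycles forced by the almost-complete shape, reproducing the infinite-type cycles already used in the proof of Corollary~\ref{fdthm}, so that no finite arithmetic root system of rank $\ge 4$ can occur, which is (iii).

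The main obstacle is the completeness of this case check rather than any single computation. One must be certain that every connected diagram of \cite[Table~1--4]{Hec2} of rank $\ge 2$ has actually been tested against conditions (a) and (b), and in particular that the exotic rank-$2$ and rank-$3$ rows (with nodes of order $8,9,12,\dots$) are genuinely excluded or accounted for rather than overlooked. The rank-$2$ read-off of the divisibility conditions and the rank-$\ge 4$ exclusion carry the bulk of the bookkeeping; the reduction to connected diagrams and the edge-equals-product identity are what render that bookkeeping finite and essentially mechanical.
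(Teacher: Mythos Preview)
Your approach is essentially the same as the paper's: both reduce the question to a case-check against Heckenberger's classification \cite{Hec2} of connected finite arithmetic root systems, retaining only those diagrams realizable as a subset $I\subseteq\{1,\dots,n-1\}$ with the braiding $q_{ij}=\xi^i$. Your explicit isolation of the two constraints --- (a) distinct node labels, since distinct $i$ give distinct $\xi^i$, and (b) edge label $=q_{ii}q_{jj}$ --- is a clean way to organize that check; the paper leaves these implicit and instead records the outcome of the check directly as Table~\ref{subsystems}, together with the observation that $m\mid n$ implies $\cB_{C_m}\le\cB_{C_n}$ so only minimal $n$ need be listed.

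One caution on completeness: your rank-$2$ discussion only treats the Cartan $B_2$, Cartan $G_2$, and one ``super'' row explicitly, whereas Table~\ref{subsystems} shows that several further rows (the standard $B_2$ rows~4--6 at $n=6$, row~11 at $n=8$, and the non-standard row~13 at $n=10$) also pass constraints (a) and (b); these are absorbed by the divisibility conditions but must not be overlooked in the exhaustive pass you describe. Likewise, your rank~$\ge 4$ argument via ``long cycles forced by the almost-complete shape'' is heuristic --- a size-$4$ subset with two antipodal pairs gives a $4$-cycle, but one still has to rule out the handful of cyclic or branched rank-$\ge 3$ diagrams in \cite[Tables~2--4]{Hec2} individually against (a) and (b). The paper handles this the same way (by inspection, with a supporting reference to \cite{WZZ} for rank $>3$), so neither proof is more rigorous than the other on this point; the substance is the compiled table.
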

\end{samepage}

\begin{proof}
As a proof, consider the explicit list\footnote{In the table, $\xi$ is a primitive $n$-th root of unity. We omit reflections that invert given ones, or remain at one generalized Dynkin diagram.} in Table~\ref{subsystems} of the indecomposable finite arithmetic\footnote{For some computations e.g. of dimensions, we use \cite{SARNA}.} root systems of Nichols subalgebras of $\cB_{C_n}$ that appear in the classification in \cite{Hec2}. As $m$ dividing $n$ implies $\cB_{C_m}\leq \cB_{C_n}$, we only list those subsystems that are strictly smaller than the smallest $\cB_{C_n}$ in which they appear. The list only contains rank 2 systems (cf. \cite[Table~1]{Hec2} or \cite[Table~1]{Hec3}) as it is clear that each vertex gives a rank 1 subsystem for any $\cB_{C_n}$, and apart from $\cB_{C_4}$ itself no higher rank systems of this form exist.
\end{proof}

\begin{table}[b]
\begin{tabular}{cccccc}
$n$ & Weyl groupoid &Cartan matrices& Type & Dimension&\cite{Hec2}\\\hline
$4$& $\vcenter{\hbox{\begin{tikzpicture}
\node [circle,draw,label=above:$\xi$] (1){};
\node [circle,draw,label=above:$-1$] (2)[right of=1,node distance=1.5cm]{};
\draw  (1.east) -- (2.west) node [above,text centered,midway](a)
{$\xi^{-1}$};
\node [circle,draw,label=above:$-1$] (3)[below of=1,node distance=1.4cm]{};
\node [circle,draw,label=above:$-1$] (4)[right of=3,node distance=1.5cm]{};
\draw  (3.east) -- (4.west) node [above,text centered,midway](b)
{$\xi$};
\node [circle,draw,label=above:$\xi$] (6)[below right=1.4cm and 0.2cm of 3]{};
\node [circle,draw,label=above:$-1$] (5)[left of=6,node distance=1.5cm]{};
\draw  (5.east) -- (6.west) node [above,text centered,midway](c)
{$\xi^{-1}$};
\node [circle,draw,label=above:$-1$] (7)[below right=1.4cm and 0.8cm of 3]{};
\node [circle,draw,label=above:$\xi^{-1}$] (8)[right of=7,node distance=1.5cm]{};
\draw  (7.east) -- (8.west) node [above,text centered,midway](d)
{$\xi$};
\node [circle,draw,label=above:$-1$] (9)[below=2.8cm of 3]{};
\node [circle,draw,label=above:$-1$] (10)[right of=9,node distance=1.5cm]{};
\draw  (9.east) -- (10.west) node [above,text centered,midway](e)
{$\xi^{-1}$};
\node [circle,draw,label=above:$\xi^{-1}$] (11)[below=4.2cm of 3,node distance=1.2cm]{};
\node [circle,draw,label=above:$-1$] (12)[right of=11,node distance=1.5cm]{};
\draw  (11.east) -- (12.west) node [above,text centered,midway](f)
{$\xi$};
\path[every node/.style={font=\sffamily\small},->,>=stealth',auto,node distance=3cm, shorten <=5pt,
  thick,main node/.style={circle,draw,font=\sffamily\Large\bfseries}]
    (a) edge[bend left=0] node [left] {$s_2$} (b)
    (c) edge[bend left=-10] node [left] {$s_2$} (e)
    (d) edge[bend left=10] node [right] {$s_1$} (e)
    (e) edge[bend left=0] node [left] {$s_2$} (f);
\path[every node/.style={font=\sffamily\small},->,>=stealth',auto,node distance=3cm, shorten <=5pt,
  thick,main node/.style={circle,draw,font=\sffamily\Large\bfseries}]
    (b.south) edge[bend left=-10] node [left] {$s_1$} (c)
    (b.south) edge[bend left=10] node [right] {$s_2$} (d);
\end{tikzpicture}}\vspace*{4pt}}$& $\begin{pmatrix} 2&-1\\-1&2\end{pmatrix}$&\begin{tabular}{c}$A_2$, standard\\not Cartan\end{tabular}& $2^2\cdot 4=16$& \begin{tabular}{c}Table 1, \\ row 2\end{tabular}\\\hline
$5$& $\vcenter{\hbox{\begin{tikzpicture}
\node [circle,draw,label=above:$\xi$] (1){};
\node [circle,draw,label=above:$\xi^2$] (2)[right of=1,node distance=1.5cm]{};
\draw  (1.east) -- (2.west) node [above,text centered,midway]
{$\xi^{-2}$};
\end{tikzpicture} }}$& $\begin{pmatrix} 2&-2\\-1&2\end{pmatrix}$&\begin{tabular}{c}$B_2$, Cartan\end{tabular}& $5^4=625$& \begin{tabular}{c}Table 1, \\ row 3\end{tabular}\\[10pt]\hline
$6$& $\vcenter{\hbox{\begin{tikzpicture}
\node [circle,draw,label=above:$\xi$] (1){};
\node [circle,draw,label=above:$-1$] (2)[right of=1,node distance=1.5cm]{};
\draw  (1.east) -- (2.west) node [above,text centered,midway](a)
{$\xi^{-2}$};
\node [circle,draw,label=above:$\xi^2$] (3)[below of=1,node distance=1.4cm]{};
\node [circle,draw,label=above:$-1$] (4)[right of=3,node distance=1.5cm]{};
\draw  (3.east) -- (4.west) node [above,text centered,midway](b)
{$\xi^2$};
\path[every node/.style={font=\sffamily\small},->,>=stealth',auto,node distance=3cm, shorten <=5pt,
  thick,main node/.style={circle,draw,font=\sffamily\Large\bfseries}]
    (a) edge[bend left=0] node [left] {$s_2$} (b);
\end{tikzpicture}}\vspace*{4pt}}$& $\begin{pmatrix} 2&-2\\-1&2\end{pmatrix}$&\begin{tabular}{c}$B_2$, standard\\not Cartan\end{tabular}& $2^2\cdot 3\cdot 6=72$& \begin{tabular}{c}Table 1, \\ row 4\end{tabular}\\[10pt]
& $\vcenter{\hbox{\begin{tikzpicture}
\node [circle,draw,label=above:$\xi^{-2}$] (1){};
\node [circle,draw,label=above:$\xi$] (2)[right of=1,node distance=1.5cm]{};
\draw  (1.east) -- (2.west) node [above,text centered,midway](a)
{$\xi^{-1}$};
\node [circle,draw,label=above:$\xi^{-2}$] (3)[below of=1,node distance=1.4cm]{};
\node [circle,draw,label=above:$-1$] (4)[right of=3,node distance=1.5cm]{};
\draw  (3.east) -- (4.west) node [above,text centered,midway](b)
{$-1$};
\path[every node/.style={font=\sffamily\small},->,>=stealth',auto,node distance=3cm, shorten <=5pt,
  thick,main node/.style={circle,draw,font=\sffamily\Large\bfseries}]
    (a) edge[bend left=0] node [left] {$s_1$} (b);
\end{tikzpicture}}\vspace*{4pt}}$& $\begin{pmatrix} 2&-2\\-1&2\end{pmatrix}$&\begin{tabular}{c}$B_2$, standard\\not Cartan\end{tabular}& $2\cdot 3^2\cdot 6=108$& \begin{tabular}{c}Table 1, \\ row 5\end{tabular}\\[10pt]
& $\vcenter{\hbox{\begin{tikzpicture}
\node [circle,draw,label=above:$\xi^{2}$] (1){};
\node [circle,draw,label=above:$-1$] (2)[right of=1,node distance=1.5cm]{};
\draw  (1.east) -- (2.west) node [above,text centered,midway]
{$\xi^{-1}$};
\node [circle,draw,label=above:$\xi^{-2}$] (3)[below of=1,node distance=1.4cm]{};
\node [circle,draw,label=above:$-1$] (4)[right of=3,node distance=1.5cm]{};
\draw  (3.east) -- (4.west) node [above,text centered,midway](b)
{$\xi$};
\path[every node/.style={font=\sffamily\small},->,>=stealth',auto,node distance=3cm, shorten <=5pt,
  thick,main node/.style={circle,draw,font=\sffamily\Large\bfseries}]
    (a) edge[bend left=0] node [left] {$s_2$} (b);
\end{tikzpicture}}\vspace*{4pt}}$& $\begin{pmatrix} 2&-2\\-1&2\end{pmatrix}$&\begin{tabular}{c}$B_2$, standard\\not Cartan\end{tabular}& $2^2\cdot 3^3=36$& \begin{tabular}{c}Table 1, \\ row 6\end{tabular}\\[10pt]\hline
$7$& $\vcenter{\hbox{\begin{tikzpicture}
\node [circle,draw,label=above:$\xi$] (1){};
\node [circle,draw,label=above:$\xi^{3}$] (2)[right of=1,node distance=1.5cm]{};
\draw  (1.east) -- (2.west) node [above,text centered,midway]
{$\xi^{-3}$};
\end{tikzpicture}}\vspace*{4pt}}$& $\begin{pmatrix} 2&-3\\-1&2\end{pmatrix}$&\begin{tabular}{c}$G_2$, Cartan\end{tabular}& $7^6=117649$& \begin{tabular}{c}Table 1, \\ row 10\end{tabular}\\[10pt]\hline
$8$& $\vcenter{\hbox{\begin{tikzpicture}
\node [circle,draw,label=above:$\xi^{2}$] (1){};
\node [circle,draw,label=above:$\xi^{-1}$] (2)[right of=1,node distance=1.5cm]{};
\draw  (1.east) -- (2.west) node [above,text centered,midway](a)
{$\xi$};
\node [circle,draw,label=above:$\xi^{2}$] (3)[below of=1,node distance=1.4cm]{};
\node [circle,draw,label=above:$-1$] (4)[right of=3,node distance=1.5cm]{};
\draw  (3.east) -- (4.west) node [above,text centered,midway](b)
{$-\xi^{-1}$};
\node [circle,draw,label=above:$\xi$] (5)[below of=3,node distance=1.4cm]{};
\node [circle,draw,label=above:$-1$] (6)[right of=5,node distance=1.5cm]{};
\draw  (5.east) -- (6.west) node [above,text centered,midway](c)
{$-\xi$};
\path[every node/.style={font=\sffamily\small},->,>=stealth',auto,node distance=3cm, shorten <=5pt,
  thick,main node/.style={circle,draw,font=\sffamily\Large\bfseries}]
    (a) edge[bend left=0] node [left] {$s_1$} (b)
    (b) edge[bend left=0] node [left] {$s_2$} (c);
\end{tikzpicture}}\vspace*{4pt}}$& $\begin{pmatrix} 2&-3\\-1&2\end{pmatrix}$&\begin{tabular}{c}$G_2$, standard\\not Cartan\end{tabular}& $2\cdot 4^2\cdot 8=256$& \begin{tabular}{c}Table 1, \\ row 11\end{tabular}\\[10pt]\hline
$10$& $\vcenter{\hbox{\begin{tikzpicture}
\node [circle,draw,label=above:$\xi^{-3}$] (1){};
\node [circle,draw,label=above:$-1$] (2)[right of=1,node distance=1.5cm]{};
\draw  (1.east) -- (2.west) node [above,text centered,midway](a)
{$\xi^2$};
\node [circle,draw,label=above:$\xi^{4}$] (3)[below of=1,node distance=1.4cm]{};
\node [circle,draw,label=above:$-1$] (4)[right of=3,node distance=1.5cm]{};
\draw  (3.east) -- (4.west) node [above,text centered,midway](b)
{$\xi^{-2}$};
\path[every node/.style={font=\sffamily\small},->,>=stealth',auto,node distance=3cm, shorten <=5pt,
  thick,main node/.style={circle,draw,font=\sffamily\Large\bfseries}]
    (a) edge[bend left=0] node [left] {$s_2$} (b);
\end{tikzpicture}}\vspace*{4pt}}$& \begin{tabular}{c}$\begin{pmatrix} 2&-4\\-1&2\end{pmatrix}$ \\[10pt] $\begin{pmatrix} 2&-3\\-1&2\end{pmatrix}$ \end{tabular}&\begin{tabular}{c}\emph{not} standard\end{tabular}& \begin{tabular}{c}$2^4\cdot 5^2\cdot 10^2$\\ $=40000$\end{tabular}& \begin{tabular}{c}Table 1, \\ row 13\end{tabular}
\end{tabular}
\caption{Finite subsystems for $\cB_{C_n}$}
\label{subsystems}
\end{table}

We note that the non-existence of finite-dimensional Nichols algebras with connected generalized Dynkin diagram of rank strictly larger than three over cyclic groups was shown in \cite{WZZ} using results of \cite{Hec2}.

\subsection{Existence of Weyl groupoids}

Note that the construction of the Weyl groupoid as defined in \cite[Section~5]{Hec1}, see also \cite[Section~2]{Hec2}, is valid for a large class, but not all diagonal braidings. It fails if, after reflecting at a vertex, we find that the new scalar $q'_{ii}=1$. We now investigate the question for which $n$ the Weyl groupoid can be defined for the diagonal type braiding of $\cB_{C_n}$.

\begin{lemma}\label{counterex}
The Weyl groupoid associated to the braiding on $\cB_{C_n}$ is \emph{not} defined at least in the cases where $pr$ divides $n$, and $p$ is a prime dividing $2r-1$.
\end{lemma}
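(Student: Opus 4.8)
The plan is to translate the existence of the Weyl groupoid into a statement about the braiding bicharacter, and then to exhibit two explicit reflections that create a diagonal entry equal to $1$. Identify the $i$-th vertex with the simple root $\alpha_i$ (so $q_{ii}=\xi^i$), and for $\beta=\sum_a c_a\alpha_a$ put $h(\beta)=\sum_a a\,c_a$ and $e(\beta)=\sum_a c_a$. Because the braiding is diagonal with $q_{ab}=\xi^a$, it is governed by the bicharacter $q(\beta,\gamma)=\xi^{h(\beta)e(\gamma)}$, so the self-braiding of a real root is $q(\beta,\beta)=\xi^{h(\beta)e(\beta)}$. The Weyl groupoid is undefined exactly when some root reachable by reflections satisfies $q(\beta,\beta)=1$, i.e. $n\mid h(\beta)e(\beta)$; since each simple root has $h(\alpha_i)e(\alpha_i)=i\not\equiv 0\pmod n$, the task is to produce such a $\beta$ after reflecting.

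First I would record the effect of a single reflection. Reflecting at vertex $k$, the Cartan entry $a_{ki}$ is determined by whichever comes first of the vanishing condition $q_{kk}^m q_{ki}q_{ik}=1$ and the order condition $q_{kk}^{m+1}=1$. When the vanishing condition governs, the self-braiding is preserved, $q'_{ii}=q_{ii}\neq 1$; when the order condition governs — which happens precisely when $\gcd(n,k)\nmid i$ — one gets instead $q'_{ii}=\xi^{\ord(\xi^k)\,i}$. A direct check shows a single reflection can never reach $1$, so the construction must use at least two steps.

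The heart of the proof is therefore a two-step reflection. Reflect first at vertex $j$ (valid provided $\gcd(n,j)\nmid i$), sending $\alpha_i$ to $\beta_1:=\alpha_i+(o_j-1)\alpha_j$ where $o_j:=\ord(\xi^j)$; using $o_j\,j\equiv 0\pmod n$ one finds $q(\beta_1,\beta_1)=\xi^{o_j i}$. If this is already $1$ we are done; otherwise reflect at the new vertex $\beta_1$ on $-\alpha_j$, producing $\beta_2:=-\alpha_j+(o_2-1)\beta_1$ with $o_2:=\ord(\xi^{o_j i})$. Expanding $h(\beta_2)e(\beta_2)$ and again absorbing all multiples of $o_j\,j$ and $o_2\,(o_j i)$ collapses the exponent to
\[
q(\beta_2,\beta_2)=\xi^{\,o_2\,j+(o_j-o_2+1)\,i}.
\]

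It then remains to choose $i$ and $j$, using $pr\mid n$ and $p\mid 2r-1$, so that
\[
o_2\,j+(o_j-o_2+1)\,i\equiv 0\pmod n
\]
while both reflections stay order-governed. Since $pr\mid n$, suitable powers of $\xi$ realise orders $o_j,o_2$ that are divisors of $n$ assembled from $p$, $r$ and $n/(pr)$; reducing the displayed congruence modulo these factors turns it into a linear condition on $i$ (coprime to the modulus forced by the order conditions), and it is exactly the arithmetic relation $p\mid 2r-1$ that makes this condition solvable. I expect this final selection to be the main obstacle: one must fix the pair of orders (which are linked by a compatibility such as $o_2\mid o_j+1$), read off admissible $i,j$, and check solvability of the reduced congruence, quite possibly after splitting into a few cases according to the common factors of $p$, $r$ and $n/(pr)$. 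Once a valid pair $(i,j)$ is exhibited, both reflections are defined — all Cartan integers are finite over roots of unity — and $\beta_2$ is a reachable root with $q(\beta_2,\beta_2)=1$, so the Weyl groupoid cannot be defined.
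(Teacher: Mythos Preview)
Your bicharacter setup is clean and your formula $q(\beta_2,\beta_2)=\xi^{(o_j-o_2+1)i+o_2 j}$ is correct, but the proof is not finished: the step you yourself flag as ``the main obstacle'' --- actually producing $i,j$ that make this exponent divisible by $n$ while keeping both reflections order-governed --- is never carried out. For the natural choices this fails. With $n=pr$, $p\mid 2r-1$, $p\nmid r$, taking $(i,j)=(r,p)$ gives $o_j=r$, $o_2=p$, hence exponent $r^2+r+p^2-pr\equiv r^2+r+p^2\pmod{pr}$; modulo $p$ this is $r(r+1)$, and since $2r\equiv 1\pmod p$ one gets $r(r+1)\equiv 3\cdot 4^{-1}$, which vanishes only for $p=3$. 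The symmetric choice $(i,j)=(p,r)$ leads to the same expression. So the two-vertex, two-reflection scheme does not obviously close, and you would need either a non-obvious choice of $(i,j)$ or further reflections.

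The paper sidesteps this entirely by working with a \emph{three}-vertex subdiagram: the vertices at $p$, $r$, and $n-p$. The outer vertices $p$ and $n-p$ are disconnected (their edge label is $\xi^{p+(n-p)}=1$), so the reflections $s_1$ and $s_3$ commute and each contributes a factor to the middle label independently. Concretely, $s_1$ sends the middle label to $\xi^{r^2}$ and then $s_3s_1$ sends it to $\xi^{r(2r-1)}$; the factor $2r-1$ appears precisely because \emph{two} separate order-governed reflections each add $r-1$ copies of a root with $e$-value $1$, giving total $e$-value $1+2(r-1)=2r-1$. With only two vertices you cannot get this doubling, which is why your congruence does not naturally produce $2r-1$. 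The paper then treats $n=6$ separately by an explicit two-step computation on a rank-two piece.

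In short: your framework is sound, but the missing ingredient is a third vertex disconnected from the first reflection vertex. Once you add the vertex $n-p$ and reflect there as well, the argument becomes a one-line computation rather than a congruence to be solved.
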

\begin{proof}
First, consider the case $n=6=2\cdot 3$ separately. In this case, consider e.g. the sub-diagram of the generalized Dynkin diagram and Cartan matrix given by 
\begin{align}\begin{array}{cc}
\vcenter{\hbox{\begin{tikzpicture}
\node [circle,draw,label=above:$-\xi$] (1){};
\node [circle,draw,label=above:$\xi^{-1}$] (2)[right of=1,node distance=1.5cm]{};
\draw  (1.east) -- (2.west) node [above,text centered,midway]
{$-1$};
\end{tikzpicture}}}&\qquad\begin{pmatrix}
2&-2\\-3&2
\end{pmatrix},
\end{array}
\end{align}
where $\xi^3=-1$. The first vertex $-\xi$ is not Cartan and reflecting at it, using the generalized Cartan matrix, gives
\begin{align}\begin{array}{cc}
\vcenter{\hbox{\begin{tikzpicture}
\node [circle,draw,label=above:$-\xi$] (1){};
\node [circle,draw,label=above:$-1$] (2)[right of=1,node distance=1.5cm]{};
\draw  (1.east) -- (2.west) node [above,text centered,midway]
{$\xi^{-1}$};
\end{tikzpicture}}}&\qquad\begin{pmatrix}
2&-2\\-1&2
\end{pmatrix}.
\end{array}
\end{align}
Here the second vertex is not Cartan, and reflecting at it gives $\xi^0=1$ at the first vertex.

Now let $n=pr$ where $p>2$ is a prime and $p$ does not divide $r>2$. In this case, we can consider the sub-diagram of the form
\begin{align}\begin{array}{cc}
\vcenter{\hbox{\begin{tikzpicture}
\node [circle,draw,label=above:$\xi^p$] (1){};
\node [circle,draw,label=above:$\xi^r$] (2)[right of=1,node distance=1.5cm]{};
\node [circle,draw,label=above:$\xi^{-p}$] (3)[right of=2,node distance=1.5cm]{};
\draw  (1.east) -- (2.west) node [above,text centered,midway]
{$\xi^{p+r}$};
\draw  (2.east) -- (3.west) node [above,text centered,midway]
{$\xi^{-p+r}$};
\end{tikzpicture}}}&\qquad\begin{pmatrix}
2&-(r-1)&0\\a_{21}&2&a_{23}\\0&-(r-1)&2
\end{pmatrix}.
\end{array}
\end{align}
The first and third vertex are \emph{not} Cartan as $p$ does not divide $r$. Note that iterated reflection at these vertices (denoted by $s_1$ and $s_3$) will leave the first and third, as well as the entries in the first and third row of the Cartan matrix unchanged. Further observe that the reflection $s_1$ gives the label $\xi^{r^2}$ at the second vertex, $s_3s_1$ gives $\xi^{(2r-1)r}$. However, if $p$ divides $2r-1$, then this vertex has label $1$, i.e. the reflection does not exist. Finally, note that if $p$ divides $2r-1$, it never divides $r$ as the numbers are coprime.
\end{proof}

In particular, we see that $\cB_{C_n}$ does not have a Weyl groupoid for $n=6$, which is the smallest such value. This also fails for multiples of $n=6,15,28,33,40,51,65,77,91$, etc. We expect this to fail for other non-prime $n$ too as Lemma \ref{counterex} only uses a small part of the reflections for $\cB_{C_{pr}}$. As an example, we provide the full Weyl groupoid for the exceptional case $4$.

\begin{example}
The case $n=4$ is of standard type, i.e. the Cartan matrix is the same at each object of the groupoid (namely, of type $A_3$). The full Weyl groupoid is displayed in Figure \ref{groupoid4}.
\begin{figure}[h]
\begin{tikzpicture}[->,>=stealth',auto,node distance=3cm,
  thick,main node/.style={circle,draw,font=\sffamily\Large\bfseries}]

\node [draw=none,fill=none] (x){};  
\node [draw=none,fill=none] (1)[left of=x,node distance=2cm]{$a_2$};
\node [draw=none,fill=none] (3)[left of=1,node distance=2cm]{$a_1$};
\node [draw=none,fill=none] (2)[below of=x,node distance=2cm]{$a_3$};
\node [draw=none,fill=none] (4)[above of=x,node distance=2cm]{$b_3$};
\node [draw=none,fill=none] (5)[right of=x,node distance=2cm]{$b_2$};
\node [draw=none,fill=none] (6)[right of=5,node distance=2cm]{$b_1$};

  \path[every node/.style={font=\sffamily\small}]
    (1) edge[bend left=10] node [below] {$s_2$} (3)
    (3) edge[bend left=10] node [above] {$s_2$} (1)
    (1) edge[bend left=-20] node [above] {$s_1$} (2)
    (2) edge[bend left=40] node [below] {$s_1$} (1)
    (1) edge[bend left=40] node [left] {$s_3$} (4)
    (4) edge[bend left=-20] node [right] {$s_3$} (1)
    (2) edge[bend left=-20] node [left] {$s_3$} (5)
    (5) edge[bend left=40] node [right] {$s_3$} (2)
    (4) edge[bend left=40] node [above] {$s_1$} (5)
    (5) edge[bend left=-20] node [below] {$s_1$} (4)
    (5) edge[bend left=10] node [above] {$s_2$} (6)
    (6) edge[bend left=10] node [below] {$s_2$} (5);
\end{tikzpicture}
\caption{Weyl groupoid of $\cB_{C_4}$}
\label{groupoid4}
\end{figure}

Here, all endomorphisms are the identity. If an arrow $s_1,s_2,s_3$ is not drawn from some object, then it is the identity on that object. The generalized Dynkin diagrams at the objects are:
\begin{align}
a_1\qquad\qquad&\longleftrightarrow \qquad\qquad\vcenter{\hbox{\begin{tikzpicture}
\node [circle,draw,label=above:$\xi$] (1){};
\node [circle,draw,label=above:$-1$] (2)[right of=1,node distance=1.5cm]{};
\node [circle,draw,label=above:$\xi^{-1}$] (3)[right of=2,node distance=1.5cm]{};
\draw  (1.east) -- (2.west) node [above,text centered,midway]
{$\xi^{-1}$};
\draw  (2.east) -- (3.west) node [above,text centered,midway]
{$\xi$};
\end{tikzpicture}}}\\
a_2\qquad\qquad&\longleftrightarrow \qquad\qquad\vcenter{\hbox{\begin{tikzpicture}
\node [circle,draw,label=above:$-1$] (1){};
\node [circle,draw,label=above:$-1$] (2)[right of=1,node distance=1.5cm]{};
\node [circle,draw,label=above:$-1$] (3)[right of=2,node distance=1.5cm]{};
\draw  (1.east) -- (2.west) node [above,text centered,midway]
{$\xi$};
\draw  (2.east) -- (3.west) node [above,text centered,midway]
{$\xi^{-1}$};
\end{tikzpicture}}}\\
a_3\qquad\qquad&\longleftrightarrow \qquad\qquad\vcenter{\hbox{\begin{tikzpicture}
\node [circle,draw,label=above:$-1$] (1){};
\node [circle,draw,label=above:$\xi$] (2)[right of=1,node distance=1.5cm]{};
\node [circle,draw,label=above:$-1$] (3)[right of=2,node distance=1.5cm]{};
\draw  (1.east) -- (2.west) node [above,text centered,midway]
{$\xi^{-1}$};
\draw  (2.east) -- (3.west) node [above,text centered,midway]
{$\xi^{-1}$};
\end{tikzpicture}}}
\end{align}
Further, $b_i$ corresponds to $a_i$ where $\xi$ is interchanged with $\xi^{-1}$.
\end{example}

\subsection{Computation results}
We can use a MATLAB\textsuperscript{\textregistered} script to verify if there are any other $n$ for which the Weyl groupoid does not exists. This is possible, as computing the reflection in the Weyl groupoid (cf. \cite[(13)]{Hec1})  for the braiding of $\cB_{C_n}$ reduces to $\op{mod} n$ computations. We checked that for $n\leq 1000$ the Weyl groupoid is only defined for $n$ a prime number or $n=4$. This leads to the following conjecture:

\begin{conjecture}\label{conjecture}
The Weyl groupoid associated to the braiding of $\cB_{C_n}$ is only defined for $n$ a prime number or $n=4$.
\end{conjecture}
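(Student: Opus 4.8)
The plan is to treat the two implications of the stated equivalence separately, using throughout the observation (already exploited for the computer verification) that, because $q_{ij}=\xi^i$ depends only on the exponent $i$ modulo $n$, a reflection of the generalized Dynkin diagram of $\cB_{C_n}$ acts by an explicit affine-linear rule on the exponents modulo $n$. Concretely, if one records the data of a braiding of this type by the diagonal exponents $e_i$ (with $q_{ii}=\xi^{e_i}$) and the edge exponents $f_{ij}$ (with $q_{ij}q_{ji}=\xi^{f_{ij}}$), then reflecting at a vertex carrying label $\xi^i$ of order $k=n/\gcd(i,n)$ replaces a neighbour's exponent $e_j$ either by $e_j$ itself (when the Cartan condition $q_{ii}^{m}q_{ij}q_{ji}=1$ binds) or by $k\,e_j$ modulo $n$ (when the order condition $q_{ii}^{k}=1$ binds), and transforms the $f_{ij}$ by a companion rule, following \cite[(13)]{Hec1}. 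The Weyl groupoid is defined precisely when no iterated reflection ever produces a vertex label $\xi^{e}=1$, i.e. an exponent $e\equiv 0\pmod n$, so the whole problem reduces to a question about the orbit of the initial configuration $(e_i,f_{ij})=(i,\,i+j)$ under this finite family of modular maps.

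For the ``if'' direction I would argue as follows. When $n=p$ is prime, the braiding is of Cartan type by the Lemma preceding Corollary \ref{fdthm}. A direct computation with the reflection rule shows that in Cartan type the diagonal entries are invariant: if $q_{ij}q_{ji}=q_{ii}^{a_{ij}}$ then $q'_{jj}=q_{jj}\,(q_{ij}q_{ji})^{-a_{ij}}q_{ii}^{a_{ij}^2}=q_{jj}$, and the Cartan integers are likewise preserved, so the braiding remains of Cartan type. Hence every vertex along every reflection path keeps its label $\xi^{j}$ with $1\le j\le p-1$, which is never $1$; the Weyl groupoid is therefore defined (even though it is infinite, and $\cB_{C_p}$ infinite-dimensional, for $p\ge 5$). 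For $n=4$ the diagram is not of Cartan type but is of standard type, and one simply checks that the six objects displayed in Figure \ref{groupoid4} close up under all reflections without ever producing the label $1$; this finite verification completes the ``if'' direction.

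For the ``only if'' direction the plan is to show that every composite $n\neq 4$ admits a rank-two or rank-three sub-Yetter--Drinfeld module together with a reflection word whose terminal step produces a label $1$, thereby extending Lemma \ref{counterex} from the family $\{pr\mid n : p \text{ prime},\ p\mid 2r-1\}$ to all composite $n\neq4$. Writing $n=ab$ with $1<a,b<n$ and choosing the reflecting vertex so that $k=\op{ord}(\xi^{i})$ is a proper divisor of $n$, the ``multiply by $k$'' steps rescale exponents while the Cartan steps translate them; the goal is to compose finitely many such moves so that the running exponent of some vertex is forced into $n\mZ$. I would organize this by the shape of the factorization of $n$ --- separating the prime-power cases $n=p^{2},p^{3},\dots$, the cases $n=2^{a}$, the cases $2p,3p,\dots$, and the remaining composites --- and in each family exhibit an explicit short word, reducing the claim to the solvability of a congruence whose coefficients are polynomials in the divisors of $n$.

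The hard part, and the reason the statement is posed as a conjecture rather than proved, is precisely this uniformity: Lemma \ref{counterex} already fails to reach cases such as $n=8,9,10,16,25$, so no single rank-two chain of the type used there suffices, and a case-free construction covering all composite $n$ is not apparent. Two points make the analysis delicate. First, after the initial reflection the data $(e_i,f_{ij})$ is no longer additive (one no longer has $f_{ij}=e_i+e_j$), so the clean ``multiply by $k$'' description degrades and one must track the full transformation of the edge exponents along the chosen word, checking at each intermediate step that the reflection actually exists and that no premature degeneration occurs. Second, one must simultaneously prove that the construction genuinely excludes $n=4$ --- consistent with Figure \ref{groupoid4} --- which shows that any valid argument must use more than mere compositeness and must pin down exactly why $4$ escapes. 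A realistic route to a full proof would combine the explicit modular reflection maps with the finite orbit computation underlying the verification up to $n=1000$, turning the empirical check into a structural statement about which residues the orbit of $(i,\,i+j)$ can reach.
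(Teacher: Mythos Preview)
The statement you are addressing is a \emph{conjecture} in the paper, not a theorem: the paper offers no proof. Its only support is Lemma~\ref{counterex}, which covers composites of the form $pr$ with $p$ prime and $p\mid 2r-1$, together with a MATLAB verification for $n\le 1000$. There is therefore no ``paper's proof'' against which to compare your proposal.

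Your ``if'' direction is essentially a complete argument and in fact goes beyond what the paper records explicitly. For $n$ prime you correctly observe that Cartan type is preserved under reflection: your computation $q'_{jj}=q_{jj}(q_{ij}q_{ji})^{-a_{ij}}q_{ii}^{a_{ij}^{2}}=q_{jj}$, using the Cartan relation $q_{ij}q_{ji}=q_{ii}^{a_{ij}}$, is correct, and the stability of the Cartan matrix under reflection then guarantees that no vertex label ever becomes $1$. For $n=4$ the finite check via the explicit groupoid in Figure~\ref{groupoid4} is valid. This direction could be written up as a lemma independent of the conjecture.

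Your ``only if'' direction, however, is a plan rather than a proof, and you say so yourself. The obstacles you identify are exactly the ones that keep the statement conjectural: after the first reflection the additive relation $f_{ij}=e_i+e_j$ is destroyed, so the clean modular dynamics degrades; any case analysis by factorization type must be shown exhaustive; and the exceptional status of $n=4$ must emerge for a structural reason rather than by accident. Neither you nor the paper closes this gap --- the paper's contribution on this side is precisely Lemma~\ref{counterex} plus the numerical evidence --- so the statement remains open.
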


The computer check is based on the following method: We do not need to check the statement for $n=kr$, if the Weyl groupoid does not exist for $r$. We can also rule out all $n=pr$ for $p$ prime covered by Lemma \ref{counterex} as well as cases where $n$ is prime (in these cases, the groupoid always exists as it is of Cartan type). This implies that the computer will only check the cases $n=pq$ prime pairs $p,q$ from some point onwards (unless the conjecture is false). We found that for most $n$ it is quickest to compute the reflections
$s_js_is_p$ where $p$ is the smallest prime dividing $n$ and $i\neq j$ start from $1,2$ and increase until some reflection fails to exist. This can be identified as a situation in which $q'_{ii}=1$ after applying $s_js_is_p$. Computational complexity is often larger if $n=pq$ for two distinct primes $q\gg p$ and $p>2$. 

Conjecture \ref{conjecture} would imply that $\cB_{C_n}$ has infinite Gelfand--Kirillov dimension for $n\geq 5$ not prime. This follows from the observations in \cite[Section~3]{Hec1}. In fact, finite Gelfand--Kirillov dimension implies that for $i\neq j$ the number
\begin{align}
-a_{ij}=\op{min}\lbrace m \in \mN_0\mid
(m + 1)_{q_{ii}}(q_{ii}^mq_{ij}q_{ji}-1) = 0\rbrace
\end{align}
is well-defined. If $q=1$, and $q_{ij}q_{ji}\neq 1$, then this is not the case. In our computations for $n>4$ not prime, we obtain that $q_{ii}=1$ for some $i$ after reflection, but there always exists $j\neq i$ with $q_{ij}q_{ji}\neq 0$ as the connected components of the generalized Dynkin diagram are preserved under reflection.\footnote{It is a not known that finite Gelfand--Kirillov-dimension implies finiteness of the root system, and hence of the Nichols algebra (cf. \cite[Section~3]{Hec1}). The Gelfand--Kirillov dimension for $\cB_{C_p}$ for primes $p>3$ might also be infinite.}



\section{The General Case \texorpdfstring{$G=G(m,p,n)$}{G=G(m,n,p)}}\label{generalcase}

We conclude this note by applying classification results for finite-dimensional Nichols algebras \cite{HV2} of semisimple group type to Fomin--Kirillov algebras associated to more general non-exceptional complex reflection groups, which we study here more explicitly. In the rank one case of elementary Nichols algebras, which is not covered by these methods, we include a conjecture derived from \cite{HLV}.

\subsection{Definition and Description}

We consider complex reflection groups of the series $G(m,p,n)$ appearing in the classification of \cite{ST} and investigate their associated Nichols algebras generalizing the Fomin--Kirillov Nichols algebras. There are also 34 exceptional groups, which we do not consider here.

\begin{definition}
Let $m,n$ be integers and $p$ a divisor of $m$. The group $G(m,p,n)$ is defined as all transformations of $\mC^n$ of the form
\begin{align}
x_i&\mapsto \theta^{\nu_i} x_{\sigma(i)}, &\sigma \in S_n,
\end{align}
where $\theta$ is a fixed primitive $n$-th root of unity and $\sum_i \nu_i=0 \mod p$.
We denote this group element by $\theta^{\nu}\sigma$. The group $G(m,p,n)$ naturally acts on $\mC^n$ by definition and contains $S_n$ as a subgroup.
\end{definition}

\begin{example}
We can recognize some of the classical series of Coxeter groups in $G(m,p,n)$:
\begin{itemize}
\item $G(1,1,n)=S_n$
\item $G(2,1,n)$ has $\theta=-1$ and any sign change given by powers $\nu=(\nu_1,\ldots,\nu_n)\in C_2^n$ appears in the group. Hence this groups is the Weyl group $B_n\cong C_2^n\rtimes S_n$.
\item In $G(2,2,n)$ the sum $\sum_i \mu_i=0 \mod 2$. Hence this group is $D_n\cong C_2^{n-1}\rtimes S_n$.
\item The group $G(m,m,2)$ gives the Coxeter group $I_2(m)$, the $n$th dihedral group $Dih_n$ of order $2n$. As a special case, $G_2$ appears for $m=6$.
\end{itemize}
\end{example}

A full list of reflections $\cS_G$ of $G$ can be given as follows: First, let $s=\theta^\nu 1$. In this case,
\begin{align}
(1-s)x_i&=(1-\theta^{\nu_i})x_i,& \forall i,
\end{align} 
which has a one-dimensional image if and only if $\nu_i=0$ for all but precisely one $i$, denote $k:=\nu_i$. Then, however, we require $k=0 \mod p$. Hence $p$ divides $k$. Consider the order $d$ of $k$ modulo $m$. This is a divisor of $m/p$. For each $d$, there are $n \varphi(d)$ reflections, and their order is $d$. We denote such a reflection by $s_i^k$, it maps $x_i\mapsto \theta^k x_i$. Second, if $\sigma=(ij)$ is a transposition, then $\theta^{\nu}(ij)$ is a reflection if and only if $\nu_i=-\nu_j \mod m$, and $\nu_k=0 \mod m$ for all $k\neq i,j$.
We denote this reflection of order two by $\theta^k(ij)$, for $k=\nu_i$. These two types describe all complex reflections in $G$.

Now, for a given reflection $s\in \cS_G$ we specify a choice of roots $\alpha_s^*$ and coroots $\alpha_s$ satisfying
\begin{align}\label{signreflections}
sx_i&=x_i-(\alpha^*_s,x_i)\alpha_s,& \forall i=1,\ldots, n.
\end{align}
For this, we denote generators of the dual space of $\mC^n$ by $y_1,\ldots, y_n$. First, for $s_i^k$, we choose 
\begin{align}
\alpha_{s^k_i}^*&=y_i, &\alpha_{s_i^k}&:=(1-\theta^k)x_i.
\end{align}
Second, for the reflection $\theta^k(ij)$, we can choose
\begin{align}
\alpha_{\theta^k(ij)}^*&=y_i-\theta^{-k}y_j,x_i,&\alpha_{\theta^k(ij)}&=x_i-\theta^k x_j.
\end{align} 

To study Fomin--Kirillov algebras associated to $G=G(m,p,n)$, we need to understand the conjugation action of reflections on reflections (note that $(\theta^k(ij))^2=1$).
\begin{align}
s_i^ks_j^k(s_i^k)^{-1}&=s_j^k,& \forall i,j,k,l\label{conj1}\\
s_t^l\theta^k(ij)(s_t^l)^{-1}&=\theta^k(i,j), &t\neq i,j,\label{conj2}\\
s_i^l\theta^k(ij)(s_i^l)^{-1}&=\theta^{k-l}(ij),\label{conj3}\\
s_j^l\theta^k(ij)(s_j^l)^{-1}&=\theta^{k+l}(ij),\label{conj4}\\
\theta^k(ij)\theta^l(ij)\theta^k(ij)&=\theta^{2k-l}(ij),&\forall k,l,\label{conj5}\\
\theta^k(ij)\theta^l(ab)\theta^k(ij)&=\theta^{l}(ab), & \lbrace i,j\rbrace\cap \lbrace a,b\rbrace=\emptyset, \label{conj6}\\
\theta^k(ij)\theta^l(it)\theta^k(ij)&=\theta^{-k+l}(jt),&j<t,\label{conj7}\\
\theta^k(ij)\theta^l(it)\theta^k(ij)&=\theta^{k-l}(tj),&j>t,\label{conj8}\\
\theta^k(ij)\theta^l(ti)\theta^k(ij)&=\theta^{k+l}(tj),\label{conj9}\\
\theta^k(ij)\theta^l(tj)\theta^k(ij)&=\theta^{k-l}(it),&i<t,\label{conj10}\\
\theta^k(ij)\theta^l(tj)\theta^k(ij)&=\theta^{-k+l}(ti),&i>t,\label{conj11}\\
\theta^k(ij)\theta^l(jt)\theta^k(ij)&=\theta^{k+l}(it),\label{conj12}\\
\theta^k(ij)s_t^l\theta^k(ij)&=s_t^l, &t\neq i,j,\label{conj13}\\
\theta^k(ij)s_i^l\theta^k(ij)&=s_j^l,\label{conj14}\\
\theta^k(ij)s_j^l\theta^k(ij)&=s_i^l.\label{conj15}
\end{align}

Following \cite[7.8--7.10]{BB} can make a convenient choice of $\Drin(kG)$-character $\lambda$ in order to define the Fomin--Kirillov algebras. This character can be chosen to take values  $\pm 1$ for Coxeter group. Given a choice of roots and coroots for $G$, the character $\lambda$ is determined by $g\triangleright \alpha_s^*=\lambda(g,s)\alpha_{gsg^{-1}}^*$. We compute its values for reflections (which is sufficient):
\begin{align}
\lambda(s_i^l,s_j^k)&=\begin{cases}\theta^{-l} &i=j \\
1 & i\neq j,\end{cases}
&\lambda(s_t^l, \theta^k(ij))&=\begin{cases}\theta^{-l} ,&t=i\\ 1 ,&i\neq j\\\end{cases}\\
\lambda(\theta^k(ij),s_t)&=\begin{cases}1,&t\neq i,j,\\ \theta^{-k},& t=i,\\ \theta^k,& t=j.\end{cases}
\\
\lambda(\theta^k(ij),\theta^l(it))&=\begin{cases}\theta^{-k},&j<t\\ -\theta^{-l},&j>t,\end{cases},
&\lambda(\theta^k(ij),\theta^l(ti))&=1,\\
\lambda(\theta^k(ij),\theta^l(tj))&=\begin{cases}-\theta^{k-l},&i<t\\ 1,&i>t,\end{cases},
&\lambda(\theta^k(ij),\theta^l(jt))&=\theta^k,\\
\lambda(\theta^k(ij),\theta^l(ij))&=-\theta^{k-l},
&\lambda(\theta^k(ij),\theta^l(ab))&=1,
\end{align}
where $\lbrace i,j\rbrace\cap \lbrace a,b\rbrace=\emptyset$.

\begin{definition}[\cite{BB}]Let $G$ be a complex reflection group with reflections $\cS_G$. 
\begin{enumerate}
\item[(i)] Define the Yetter--Drinfeld module $Y_G$ to be $\op{span}_k\langle r_s\mid s\in \cS\rangle$, with coaction $\delta (r_s)=s\otimes r_s$ and action $g\triangleright r_s=\lambda(g,s)r_{gsg^{-1}}$. It comes with the braiding $\Psi_G(r_s\otimes r_t)=(s\triangleright r_t)\otimes t_s$.
\item[(ii)]
We refer to the  Nichols algebra $\cB_G:=\cB(Y_G)$ of $Y_G$ as the \emph{Fomin--Kirillov Nichols algebra} (or \emph{FK Nichols algebra}) of $G$.
\item[(iii)] 
Let $G$ be a Coxeter group. Then the quadratic algebra $\cE_G:=T(Y_G)/(\ker \Psi_G +\ide_{Y_G\otimes Y_G})$ associated to the Yetter--Drinfeld module $Y_G$ is called the \emph{Fomin--Kirillov algebra} (or \emph{FK algebra}) of $G$.
\end{enumerate}
\end{definition}

Note that the FK Nichols algebra of the cyclic group with $m$ elements considered in Section \ref{cyclicsection} arises as $\cB_G$ in the case $G=G(mp,p,1)$, while the algebras $\cE_n=\cE_{S_n}$ are the original algebras studied in \cite{FK}.

\subsection{Semisimple Decomposition}

In this section, we consider complex reflection groups $G=G(m,p,n)$ where $n\geq 2$, i.e. \emph{not} cyclic groups and work over a field $k$ of characteristic zero.

\begin{lemma}\label{ranklemma} The FK Nichols algebras $\cB_G$ for the complex reflection groups $G=G(m,p,n)$ have the following ranks, with corresponding dimensions of the simple YD modules $Y_G$. Note that
\begin{align}
\dim Y_G&=n\left(m\frac{(n-1)}{2}+\frac{m}{p}-1\right).
\end{align} 
\begin{enumerate}
\item[(i)]  If $n\geq 3$, the rank of $\cB_G$ is $m/p$. There is one YD summand of dimension $mn(n-1)/2$ spanned by $r_s$ for the order two reflections $s=\theta^k(ij)$. For each $d|(m/p)$, $d>1$, we have $\varphi(n)$ $n$-dimensional YD summands generated by the reflections $\lbrace r_s\mid s=s^k_i, i=1,\ldots,m\rbrace$ for $k$ of order $d$ modulo $m$.
\item[(ii)] If $n=2$, and $p$ is odd, then the same as in (i) applies.
\item[(iii)] If $n=2$ and $p=2b$ is even, then $\cB_G$ for $G=G(2a,2b,2)$ has rank $a/b+1$. The YD summand of order two reflections splits as a direct sum of two submodules $V^{\op{odd}}\oplus V^{\op{even}}$, $V^{\op{even}}$ is spanned by $\lbrace r_s\mid s=\theta^{2k}(12), k=0,\ldots,a-1\rbrace$, and $V^{\op{odd}}$ is spanned by $\lbrace r_s\mid s=\theta^{2k+1}(12), k=0,\ldots,a-1\rbrace$.
\end{enumerate}
\end{lemma}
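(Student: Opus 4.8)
The plan is to use the standard principle that the rank of a Nichols algebra of a semisimple Yetter--Drinfeld module over a group algebra equals the number of irreducible Yetter--Drinfeld summands of that module. Since $\delta(r_s)=s\otimes r_s$ and $g\triangleright r_s=\lambda(g,s)r_{gsg^{-1}}$, the span $V_\cO:=\op{span}_k\langle r_t\mid t\in\cO\rangle$ over any conjugacy class $\cO\subseteq\cS_G$ is a Yetter--Drinfeld submodule, and $Y_G=\bigoplus_\cO V_\cO$ decomposes over the conjugacy classes of reflections. So I would first reduce the rank computation to counting conjugacy classes of reflections, and the dimension claims to counting the sizes of these classes.

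For $\dim Y_G$ I would count the two families separately. The transposition-type reflections $\theta^k(ij)$ are indexed by an unordered pair $\{i,j\}$, of which there are $n(n-1)/2$, and an exponent $k\in\mZ/m$, so there are $mn(n-1)/2$ of them; the defining congruence $\sum_i\nu_i\equiv 0\bmod p$ holds automatically since $\nu_i=-\nu_j$. The diagonal reflections $s_i^k$ require $p\mid k$ and $k\not\equiv 0\bmod m$, so $k$ runs over the $m/p-1$ nonzero multiples of $p$, giving $n(m/p-1)$ reflections. Adding these yields the stated formula for $\dim Y_G$.

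The main work, and the step I expect to be most delicate, is the orbit analysis, for which the conjugation formulas \eqref{conj1}--\eqref{conj15} are precisely the input. For the diagonal reflections, \eqref{conj1} shows they commute, while \eqref{conj14}--\eqref{conj15} show that conjugation by a transposition-type reflection permutes the index $i$ but fixes the exponent $k$; as these reflections realize all of $S_n$ on the indices, the class of $s_i^k$ is exactly $\{s_1^k,\dots,s_n^k\}$, of size $n$. Grouping by the order $d$ of $k$ modulo $m$ (necessarily $d\mid m/p$), there are $\varphi(d)$ exponents of order $d$, hence $\varphi(d)$ classes of dimension $n$ for each $d\mid(m/p)$ with $d>1$. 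For the transposition-type reflections I would use \eqref{conj7} together with the genuine transpositions $\theta^0(ij)$ to move between all pairs while shifting the exponent; the key check is that when $n\geq 3$ an identity such as $\theta^1(12)\theta^0(13)\theta^1(12)=\theta^{-1}(23)$ already flips the parity of $k$, so all $mn(n-1)/2$ reflections fuse into a single class. The delicate case is $n=2$, where only the pair $(12)$ survives and the available conjugations \eqref{conj5},\eqref{conj3},\eqref{conj4} change $k$ only through the reflections $k\mapsto 2c-k$ and the translations $k\mapsto k\pm l$ with $l\in p\mZ$; composing these, the subgroup of realizable translations of $k$ is $\gcd(2,p)\mZ$. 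Hence when $p$ is odd this is all of $\mZ/m$ and the class stays single (case (ii)), while when $p=2b$ is even it is $2\mZ$, so the parity of $k$ is a conjugation invariant and the class splits into $V^{\op{even}}$ and $V^{\op{odd}}$, each of size $a$ (case (iii)).

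It remains to confirm that each $V_\cO$ is irreducible. Its $s$-graded fibre is the one-dimensional $C_G(s)$-representation $\lambda(-,s)$, so $V_\cO\cong\op{Ind}_{C_G(s)}^G\lambda(-,s)$ is the irreducible Yetter--Drinfeld module attached to the datum $(\cO,\lambda(-,s))$, irreducibility being automatic (and the sum $Y_G=\bigoplus_\cO V_\cO$ being a genuine semisimple decomposition) in characteristic zero since the fibre is one-dimensional. Assembling the counts then gives the rank as $1+(m/p-1)=m/p$ in cases (i) and (ii), and $2+(m/p-1)=a/b+1$ in case (iii), together with the asserted dimensions of the summands.
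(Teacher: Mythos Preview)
Your proof is correct and follows essentially the same route as the paper's: both reduce the rank computation to the enumeration of conjugacy classes of reflections via the relations \eqref{conj1}--\eqref{conj15}, treat the diagonal and transposition-type reflections separately, and isolate the $n=2$ parity phenomenon. Your write-up is in places more explicit than the paper's---you give the clean $\gcd(2,p)$ description of the translation subgroup acting on the exponent $k$ when $n=2$, and you spell out why each $V_{\cO}$ is an irreducible Yetter--Drinfeld module (one-dimensional fibre in characteristic zero)---whereas the paper simply asserts simplicity once the conjugacy classes are found.
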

\begin{proof}
This follows from considering the relations (\ref{conj1}--\ref{conj15}) and distinguishing cases. Note that the action in the YD modules $Y_G$ is given by conjugation up to scalars (given by $\lambda$). Hence, in order to find a direct sum decomposition into simples of $Y_G$, we need to determine the conjugacy classes of reflections in $G$. We write $r\sim s$ if the reflections $r$ and $s$ are conjugate. First, note that, if $p\neq m$, we have reflections of the form $s_i^k$, for any $k\neq 0 \mod m$ that is divisible by $p$. Relations (\ref{conj1}), (\ref{conj13})--(\ref{conj15}) show that these form a simple YD submodule (for fixed $k$), denoted by $V_k$. All reflections $s_i^k$ have order $d>1$, where $d$ is the order of $k$ modulo $m$, a divisor of $m/p$. For fixed $d$, there are $\varphi(d)$ such $k$, each giving a direct summand.

Second, we are given reflections of the form $\theta^k(ij)$. It is clear from relations (\ref{conj2})--(\ref{conj12}) that these determine a YD submodule, denoted by $V_0$. Hence, $Y_G$ decomposes as a direct sum
\[
Y_G=V_0\oplus \bigoplus_{i=1}^{m/p-1}V_{ip}.
\]
We will treat the cases $n=2$ and $n\geq 3$ separately. If $n=2$, the module $V_0$ is only simple if $p$ is odd. To prove this assertion, we consider relations (\ref{conj2})--(\ref{conj5}) and note that by (\ref{conj5}), $\theta^l(12)\sim \theta^k(12)$ if and only if $l$, $k$ have the same parity (using conjugation by $\theta(12)$ if $m>1$, otherwise the assertion is clear). But by (\ref{conj3}), two reflections $\theta^k(12)$, $\theta^l(12)$ are also conjugate if $k=l \mod p$. This links $k$ and $l$ of opposite parity if and only if $p$ is odd. Since in the $n=2$ case these are all relevant $\sim$ relations, we find that $V_0$ is simple if $p$ is odd, and splits as a direct sum $V^{\op{odd}}\oplus V^{\op{even}}$ of simples if $p$ is even.

It remains to show that if $n\geq 3$, we have $\theta^k(ij)\sim \theta^l(st)$ for all possible choices of $s,t,i,j,k,l$. We prove this by distinguishing several cases (assume $i,j,s,t$ pairwise distinct). For example, $\theta^k(ij)\sim \theta^l(st)$ for  $i<j<s<t$, or $i<s<j<t$ follows from 
\begin{align*}
\theta^{k-l}(is)\theta^l(st)\theta^{k-l}(is)&=\theta^k(it)\\
(jt)\theta^k(it)(jt)&=\theta^k(ij),
\end{align*}
using (\ref{conj12}) and (\ref{conj11}). The other cases to consider follow using appropriate combinations of the relations (\ref{conj5})--(\ref{conj12}).
\end{proof}

\subsection{Dimensionality}

As the YD module $Y_G$ often decomposes as a direct sum of more than one absolutely simple YD module, we can use the classification results of \cite{HV1} in the rank two case, and \cite{HV2} in higher rank cases to determine which $G=G(m,p,n)$ give infinite-dimensional Nichols algebras $\cB_G$. This will not provide a complete answer as the rank one case  --- of elementary Nichols algebras --- is not completely solved, but we will be able to derive conjectures from the conjecture made in \cite[1.1]{HLV}.

\begin{lemma}\label{indeclemma}
Let $n\geq 2$. The YD module $Y_G$ is braid-indecomposable if and only if $G\neq G(2,2,2)$.
\end{lemma}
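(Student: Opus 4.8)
The plan is to determine when the Yetter--Drinfeld module $Y_G$ fails to be braid-indecomposable and to show that this happens precisely for $G(2,2,2)$. Recall that braid-indecomposability (in the sense of \cite{HV2}) is a stronger notion than mere indecomposability as a YD module: it asks that one cannot partition the simple summands into two nonempty sets that braid trivially with each other. So the natural strategy is to combine the decomposition of $Y_G$ into simple summands from Lemma \ref{ranklemma} with an analysis of the diagonal/off-diagonal braiding between summands. First I would recall from Lemma \ref{ranklemma} that $Y_G=V_0\oplus\bigoplus_{i=1}^{m/p-1}V_{ip}$ (with $V_0$ possibly splitting into $V^{\op{odd}}\oplus V^{\op{even}}$ only when $n=2$ and $p$ is even), and that the braiding between summands is governed by the conjugation action \eqref{conj1}--\eqref{conj15} together with the character $\lambda$.

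The forward implication is the routine direction: I would exhibit the failure for $G(2,2,2)=D_2$. Here $m=p=2$, $n=2$, so $m/p=1$ and there are no $V_{ip}$ summands; the module is $Y_G=V_0$, which by Lemma \ref{ranklemma}(iii) splits as $V^{\op{even}}\oplus V^{\op{odd}}$. Since $a=b=1$, each piece is one-dimensional, so $Y_{G(2,2,2)}$ is a sum of two one-dimensional summands. I would check directly from the braiding $\Psi_G(r_s\otimes r_t)=(s\triangleright r_t)\otimes r_s$ and the relevant $\lambda$-values that the cross-braiding between $V^{\op{even}}$ and $V^{\op{odd}}$ is trivial (the two reflections $(12)$ and $\theta(12)$ commute in $D_2\cong C_2\times C_2$, so the off-diagonal entries $q_{st}q_{ts}$ equal $1$), which exhibits a braid-decomposition and proves $Y_{G(2,2,2)}$ is \emph{not} braid-indecomposable.

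The substantive direction is to show braid-indecomposability for every other $G(m,p,n)$ with $n\geq 2$. The key step is to verify that the summand $V_0$ of order-two reflections always braids nontrivially with each $V_{ip}$, and that the various $V_{ip}$ braid nontrivially among themselves, so that no nontrivial partition of the summands can braid trivially. To make this precise I would pick, for any two summands under consideration, one reflection $s=\theta^k(ij)$ or $s=s_i^k$ from each and compute the off-diagonal braiding scalar $q_{st}q_{ts}$ from the $\lambda$-table: for a mixed pair like $\theta^k(ij)$ and $s_i^l$ the value $\lambda(\theta^k(ij),s_i^l)\lambda(s_i^l,\theta^k(ij))$ involves factors such as $\theta^{\pm k}$, $\theta^{-l}$, and signs, which I would show cannot all cancel to $1$ when $n\geq 2$ (and $G\neq G(2,2,2)$). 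The only place this argument must be handled delicately is exactly when $n=2$ and $p$ is even, where $V_0$ itself splits: there I would use that the $\lambda$-value $\lambda(\theta^k(12),\theta^l(12))=-\theta^{k-l}$ carries a sign, so that for $V^{\op{even}}$ and $V^{\op{odd}}$ the cross-braiding is $-\theta^{k-l}\cdot(-\theta^{l-k})=\theta^0\cdot(\pm1)$, which degenerates to $1$ only in the single case $a=b=1$, i.e.\ $G(2,2,2)$, and is otherwise nontrivial because $\theta$ has order $m=2a>2$.

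The main obstacle I anticipate is the bookkeeping in this last case: one has to track not just that summands fail to \emph{commute} but that they fail to braid \emph{trivially}, and the sign coming from $\lambda(\theta^k(ij),\theta^l(ij))=-\theta^{k-l}$ is what separates $G(2,2,2)$ from the larger dihedral groups $G(m,m,2)=I_2(m)$. So the heart of the proof is a careful case analysis of the product $q_{st}q_{ts}$ across the partition of summands, showing it is $\neq 1$ in every case except the one sign-degeneration at $m=2$, $a=b=1$. The rest --- reducing braid-decomposability to a statement about these pairwise scalars, and assembling the pairwise nontriviality into genuine braid-indecomposability via a connectivity argument on the graph whose vertices are the simple summands and whose edges record nontrivial cross-braiding --- should follow the framework of \cite{HV2} without further surprises.
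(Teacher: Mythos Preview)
Your overall strategy---decompose $Y_G$ via Lemma~\ref{ranklemma} and show the graph of simple summands is connected under ``nontrivial cross-braiding''---matches the paper. But the criterion you use for an edge is wrong, and this is exactly what makes your argument break down at the crucial step.

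You propose to detect nontrivial cross-braiding by computing scalars $q_{st}q_{ts}=\lambda(s,t)\lambda(t,s)$. That is only the right test when the braiding is diagonal, i.e.\ when $s$ and $t$ commute. Here the braiding is of rack type: $\Psi^2(r_s\otimes r_t)$ is a scalar multiple of $r_{s'}\otimes r_{sts^{-1}}$ with $s'=(sts^{-1})s(sts^{-1})^{-1}$, so $(\ide-\Psi^2)(r_s\otimes r_t)\neq 0$ automatically whenever $s$ and $t$ fail to commute, regardless of any scalar. Your own computation exposes the problem: for $r_{\theta^k(12)}\in V^{\op{odd}}$ and $r_{\theta^l(12)}\in V^{\op{even}}$ you get $(-\theta^{k-l})(-\theta^{l-k})=1$ \emph{always}, not ``only when $a=b=1$''. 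So the scalar test cannot distinguish $G(2,2,2)$ from the larger dihedral groups $G(2a,2a,2)$. The paper's proof instead computes $(\ide-\Psi^2)(\theta(12)\otimes(12))=\theta(12)\otimes(12)-\theta^2\cdot\theta^3(12)\otimes\theta^2(12)$ directly and observes that this vanishes iff $\theta^2=1$, i.e.\ iff $m=2$; the obstruction is the non-commutation $\theta(12)\cdot(12)\cdot\theta(12)=\theta^2(12)$, not a scalar. Similarly, for $V_k$ against $V_0$ the paper shows $\ad(s_i^k)(\theta^l(ij))\neq 0$ by an explicit $(\ide-\Psi^2)$ computation.

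A secondary point: your claim that ``the various $V_{ip}$ braid nontrivially among themselves'' is false---the supports of $V_k$ and $V_{k'}$ commute, and on basis vectors $r_{s_i^k}\otimes r_{s_j^{k'}}$ with $i\neq j$ one has $\Psi^2=\ide$, while for $i=j$ the scalar is $\theta^{-k-k'}$, so e.g.\ $V_k$ and $V_{m-k}$ braid trivially. This does not actually damage the conclusion, since (as the paper does) it suffices to connect every $V_k$ to $V_0$; but you should not rely on the stronger claim.
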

\begin{proof}
Case 1: Let $n\geq 3$ or $p$ odd so that $V_0$ from the proof of Lemma~\ref{ranklemma} is absolutely simple. Simple YD modules are braid-indecomposable, so assume $m\neq p$. Then $Y_G$ contains reflections of the form $s_i^k\in V_k$ and $\theta^l(ij)\in V_0$. Assume that $Y_G=M'\oplus M''$, $M',M''\neq 0$. Then wlog $V_k\leq M'$, and $V_0\leq M''$. But we compute
\begin{align}
\ad(s_i^k)(\theta^l(ij))=(\ide-\Psi^2)s_i^k\otimes\theta^l(ij)&=s_i^k\otimes \theta^l(ij)+\lambda'\Psi(\theta^{l-k}(ij)\otimes s_i^k),\\
&=s_i^k\otimes \theta^l(ij)-\lambda'' s_j^k\otimes \theta^{l-k}(ij)\neq 0,
\end{align}
where $\lambda',\lambda''$ are scalars. This implies that $Y_G$ is braid-indecomposable.

Case 2: Let $n=2$ and $p=2a$ even. In this case, $V_0=V^{\op{odd}}\oplus V^{\op{even}}$. It remains to check the case where $Y_G$ has rank two, as if the rank is larger than two, there exist reflections of the form $\theta_i^k$ and the computation used in Case 1 can be applied. The rank two case applies to groups of the form $G(2a,2a,2)$. Consider $\ad(V^{\op{odd}})(V^{\op{even}})$:
\begin{align*}
(\ide-\Psi^2)\theta(ij)\otimes (ij)&=\theta(ij)\otimes (ij)-\theta^2\cdot \theta^3(ij)\otimes \theta^2(ij).
\end{align*}
This vanishes if and only if $\theta^2=1$, i.e. $m=2$. Hence $Y_G$ is braid-indecomposable unless $G=G(2,2,2)$.
\end{proof}

\begin{corollary}$~$
\begin{enumerate}
\item[(i)]
If $n\geq 3$, then all $G(m,p,n)$ where the rank of $Y_G$ is strictly larger than one give infinite-dimensional FK Nichols algebras $\cB_G$ and hence also infinite-dimensional FK algebras $\cE_G$.

In particular, this is true for all $G(m,p,n)$ where $n\geq 3$, and $m\neq p$.
\item[(ii)]
If $n=2$, then all $G(m,p,n)$ where $m\neq p$ or $p$ even have that $\cB_G$ is infinite-dimensional, with the exceptions of
\begin{align*}
D_2=S_2\times S_2=G(2,2,2), &&B_2=G(2,1,2)\cong Dih_4=I_2(4)=G(4,4,2).
\end{align*}
\end{enumerate}
\end{corollary}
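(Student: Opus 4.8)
The two parts share a single mechanism, so the plan is to prove them together. First I would rewrite the hypothesis ``$\rank Y_G > 1$'' in terms of the parameters using Lemma~\ref{ranklemma}: for $n \geq 3$, and for $n = 2$ with $p$ odd, the rank equals $m/p$, so it exceeds one exactly when $m \neq p$; for $n = 2$ with $p = 2b$ even it equals $a/b + 1 \geq 2$ and hence always exceeds one. Thus in every case covered by the statement (part (i) when $m \neq p$, part (ii) under ``$m \neq p$ or $p$ even'') the hypothesis is equivalent to $Y_G$ being a direct sum $Y_G = V_0 \oplus \bigoplus_{i=1}^{m/p-1} V_{ip}$ of at least two absolutely simple Yetter--Drinfeld modules, the blocks constructed in the proof of Lemma~\ref{ranklemma}.

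Next I would invoke Lemma~\ref{indeclemma}, which makes $Y_G$ braid-indecomposable for all such $G$ except $G(2,2,2)$. The latter I dispose of immediately: there $Y_G$ is braid-decomposable, $G = S_2 \times S_2$, and $\cB_{G}$ is the finite-dimensional braided tensor product of two copies of $\cB_{C_2}$, so $D_2$ is a genuine exception to the conclusion rather than an instance of it. For every remaining $G$ we are left with a \emph{braid-indecomposable} Yetter--Drinfeld module that is a sum of at least two absolutely simple blocks over the non-abelian group $G$. This is precisely the class classified in \cite{HV1} (rank two) and \cite{HV2} (arbitrary rank), each of which outputs a \emph{finite, explicit} list of such modules whose Nichols algebra is finite-dimensional. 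The proof thereby reduces to checking that the modules $Y_G$ at hand do not occur on that list, with the stated exceptions.

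To perform the check I would read off the invariants of the blocks from Lemma~\ref{ranklemma} and the values of the cocycle $\lambda$. The block $V_0$ is supported on the single conjugacy class of order-two reflections $\theta^k(ij)$, of size $mn(n-1)/2$, with self-braiding $\lambda(\theta^k(ij),\theta^k(ij)) = -1$; each block $V_{ip}$ is supported on a class of $n$ diagonal reflections $s_i^k$ of order $d = \ord(k \bmod m) > 1$ and self-braiding $\theta^{-k}$, a primitive $d$-th root of unity. Since the dimension $\dim Y_G = n(m(n-1)/2 + m/p - 1)$ recorded in Lemma~\ref{ranklemma} grows without bound in $(m,n)$ (as do the reflection orders $d$ and the self-braiding scalars), it exceeds the maximal dimension on the finite Heckenberger--Vendramin list for all but finitely many triples $(m,p,n)$, which are thereby excluded at once; the finitely many small survivors I would then match by hand against \cite{HV1} and \cite{HV2}. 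The only module that survives this matching is the rank-two one attached to $G(2,1,2) \cong G(4,4,2) = B_2$, which appears in the finite-dimensional part of \cite{HV1}; this is the unique $n = 2$ exception (note that, by contrast, the quadratic cover $\cE_{B_2}$ is infinite-dimensional by \cite{KM}). Finally, $\dim \cB_G = \infty$ forces $\dim \cE_G = \infty$, since $\cE_G$ surjects onto its Nichols-algebra quotient $\cB_G$.

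The reduction in the first two paragraphs is routine once Lemmas~\ref{ranklemma} and \ref{indeclemma} are in hand. I expect the verification step to be the main obstacle: one must identify the rack-and-cocycle datum of each block $V_0$ and $V_{ip}$ --- as dictated by the conjugation relations (\ref{conj1}--\ref{conj15}) and the $\lambda$-values --- in the precise normalisation used by Heckenberger and Vendramin, and then settle the finitely many small-parameter cases left over after the crude size bound. The delicacy is concentrated exactly where the support is too small for the size estimate to bite, namely around $B_2$ and the low dihedral groups $G(m,m,2)$, where whether the module lands on the finite-dimensional list depends on fine features of the cocycle rather than on the size of the support alone.
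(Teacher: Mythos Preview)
Your proposal is correct and follows the same overall architecture as the paper: use Lemma~\ref{ranklemma} to translate the rank hypothesis, Lemma~\ref{indeclemma} for braid-indecomposability (disposing of $G(2,2,2)$), and then confront the Heckenberger--Vendramin classification. The only real difference is in how you organise the confrontation. The paper does not use a ``total dimension grows, so only finitely many survive'' argument; instead it reads off from \cite{HV2} the sharper constraint that in the finite-dimensional list every simple summand has dimension at most three (with specific patterns in rank $\geq 3$), and from \cite{HV1} that the total dimension in rank two lies between $4$ and $6$. Since for $n\geq 3$ the block $V_0$ already has dimension $mn(n-1)/2\geq 3m$, and for $n=2$ the summand dimensions are explicit, this kills almost everything at once and leaves exactly $G(2,1,2)$, $G(4,4,2)$, $G(6,6,2)$, and $G(4,2,2)$ to treat by hand (done in Example~\ref{exceptions}). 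Your growth argument reaches the same endpoint but with a larger residual finite set to check; the paper's per-summand bounds are tighter and avoid the need to track cocycle data or self-braidings, which you introduce but the paper never uses. One small omission: the paper also notes explicitly that $G$ is generated by the support of $Y_G$ (the reflections), a hypothesis required to invoke \cite[Theorem~2.5]{HV2}.
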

\begin{proof}
This is an application of the classification results of \cite{HV1,HV2}.

Proof of (i), $\op{rank}Y_G\geq 3$: This uses \cite[Theorem 2.5]{HV2}, which applies by braid-indecomposability proved in Lemma~\ref{indeclemma} and the fact that complex reflection groups are generated by their set of reflections, which are the support of $Y_G$. Note that in this classification the maximal dimension of a simple YD summand is three, and rank three only appears together with two one-dimensional summands (type $\beta_3'$), or one one-dimensional and one two-dimensional summand (type $\beta_3''$). However, for $n\geq 3$ there is always an absolutely simple summand of dimension $mn(n-1)/2$, and the other summands have dimension $n$. Hence all such $Y_G$ of rank greater or equal to three are infinite-dimensional.

Proof for $\op{rank}Y_G=2$: For the rank to be two, assuming that $n\geq 3$, we need $m/p=2$. Then there exist a YD summand of dimension $n$ and one of dimension $mn(n-1)/2$, and $\dim Y_G=n+mn(n-1)/2$. According to \cite[Table~1]{HV1} all rank two braid-indecomposable finite-dimensional Nichols algebras of rank two have dimension between 4 and 6. No such $m,n$ exist in this case.

Proof of (ii), $\op{rank}Y_G\geq 3$: If $p$ is odd, we are in this case if and only if $m/p\geq 3$. But then there is a YD summand of dimension $m\geq 3$, while the other summands have dimension two. Again using \cite[Theorem 2.5]{HV2}, we find that no such braid-indecomposable semisimple YD module has a finite-dimensional Nichols algebra. If $p=2a$, we require $m/p\geq 2$ and have two YD summands of dimension $2an(n-1)/4=a$, and at least one of dimension two. Finite-dimensionality can only occur if $a\leq 2$ by the same result. This leaves us to only consider the case $G(4,2,2)$ which either has a skeleton of type $\alpha_3$ or is infinite-dimensional. It is in fact infinite-dimensional --- see Example~\ref{exceptions}(iii).

Proof for $\op{rank}Y_G=2$: If $p$ is odd, we need $m/p=2$, and $4\leq m+2\leq 6$. This leaves the group $G(2,1,2)$. If $p=2a$, we need $m/p=1$ and $4\leq m\leq 6$. Hence it suffices to consider the groups $G(4,4,2)$ and $G(6,6,2)$. But following \cite[Appendix~B]{HV2} $Y_{G(6,6,2)}$ is not of finite type as there is no skeleton with two dimension three vertices.

To complete the proof, we will treat the unanswered cases $G(2,1,2)$, $G(4,2,2)$, and $G(4,4,2)$ in Example~\ref{exceptions} below. Finally, note that $\cB_{D_2}$ is finite-dimensional as it is isomorphic to $\cB_{S_2}\otimes \cB_{S_2}$.
\end{proof}

\begin{example}\label{exceptions}$~$
\begin{enumerate}
\item[(i)]$G=G(2,1,2)=B_2$: We can check explicitly, using the table of actions in \cite[Example 1.2]{HV1} that $\cB_{B_2}$ is isomorphic to a 64-dimensional Hopf algebra with skeleton of type $\alpha_2$ (cf. \cite[B.1]{HV2}). The simple YD modules $V_0$ and $V_{1}$ (using the notation from Lemma \ref{ranklemma}) have support given by the conjugacy classes $\lbrace g=(12),\epsilon g=\theta(12)\rbrace$ and $\lbrace h=s_1^1,\epsilon h=s_2^1\rbrace$, respectively. Here, $\epsilon=s_1^{-1}s_2^1$. The characters are given by $\rho(\epsilon)=\rho(g)=\theta=-1$ and $\sigma(\epsilon)=\sigma(h)=\theta=-1$ (using the notation from \cite{HV1}).

The Nichols algebra $\cB_{B_2}$ hence has the Hilbert polynomial (cf. \cite[1.1]{HV1})
\begin{align}
\cH^{\cB}_{B_2}(t_1,t_2)&=(1+t_1)^2(1+t_1t_2)^2(1+t_2)^2.
\end{align}
Computations with MAGMA show that the corresponding FK algebra $\cE_{B_2}$ has a different Hilbert polynomial:
\begin{align}
\cH^{\cE}_{B_2}&=1+4t+8t^2+12t^3+16t^4+20t^5+24t^6+28t^7+\ldots.
\end{align}
Hence the Hilbert polynomials differ from degree $4$ onwards, and thus $\cB_{B_2}\ncong \cE_{B_2}$. This is the first Weyl group for which the two algebras differ.\footnote{Note that it is an open question whether $\cB_{S_n}\cong \cE_{S_n}$ for $n\geq 6$.} Note that according to \cite{KM}, $\cB_{B_2}$ is in fact infinite-dimensional.
\item[(ii)]$G=I_2(4)=G(4,4,2)$: Similarly to (i) we again see that this rank two YD module gives \cite[Example 1.2]{HV1}, with the same centralizer characters $\rho$, $\sigma$. Hence $\cB_{B_2}$ and $\cB_{I_2(4)}$ are isomorphic. This is not surprising as both groups are isomorphic to the dihedral group $Dih_4$. An explicit isomorphism is given by mapping $(12)\in B_2\mapsto (12)\in I_2(4)$, $s_1^1\in B_2\mapsto \theta(12)\in I_2(4)$.
\item[(iii)]$G=G(4,2,2)$: We claim that $\cB_G$ in this case is infinite-dimensional. As it is of rank three, we can look at all possible pairs of YD summands. The summands are given by
\begin{align*}
\op{Supp}V^{\op{even}}=\lbrace (12),\theta^2(12)\rbrace, &&\op{Supp}V^{\op{odd}}=\lbrace\theta(12),\theta^3(12)\rbrace, && \op{Supp}V_2=\lbrace s_1^2,s_2^2\rbrace.
\end{align*}
The pair $V^{\op{odd}},V_2$ gives example (i), $V^{\op{even}},V^{\op{odd}}$ gives example (ii), and it can be show that $V^{\op{even}},V_2$ again gives the same YD module as in \cite[Example~1.2]{HV1}. Hence the skeleton of $\cB_{G}$ is
\begin{align}
\vcenter{\hbox{\begin{tikzpicture}
\node (1){};
\node [circle,fill, inner sep=1] (1.1)[above of=1,node distance=0.1cm]{};
\node [circle,fill, inner sep=1] (1.2)[below of=1,node distance=0.1cm]{};
\node (2)[above right of=1,node distance=1.5cm]{};
\node [circle,fill, inner sep=1] (2.1)[above of=2,node distance=0.1cm]{};
\node [circle,fill, inner sep=1] (2.2)[below of=2,node distance=0.1cm]{};
\node  (3)[below right of=2,node distance=1.5cm]{};
\node [circle,fill, inner sep=1] (3.1)[above of=3,node distance=0.1cm]{};
\node [circle,fill, inner sep=1] (3.2)[below of=3,node distance=0.1cm]{};
\draw[dashed]  (1.east) -- (2.west) node [above, text centered,midway]{};
\draw[dashed]  (2.east) -- (3.west) node [above, text centered,midway]{};
\draw[dashed]  (1.east) -- (3.west) node [above, text centered,midway]{};
\end{tikzpicture}}}
\end{align}
Such a skeleton is not of finite type (cf. \cite[Theorem 2.5]{HV2}) and hence $\cB_G$ is infinite-dimensional.
\end{enumerate}
\end{example}

\begin{conjecture}\label{conjecture2}
Up to isomorphism, the only finite-dimensional \emph{elementary} Fomin-Kirillov Nichols algebras $\cB_G$, for $G=G(m,p,n)$, are those of the symmetric groups $G=S_2,S_3,S_4,S_5$.

Hence, the only finite-dimensional FK Nichols algebras for complex reflection groups $G(m,p,n)$ are those of $S_2\cong C_2, S_3\cong Dih_3,S_4\cong D_3, S_5, B_2\cong Dih_4, C_3,C_4, D_2\cong S_2\times S_2$.
\end{conjecture}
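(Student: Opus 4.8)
The plan is to split the statement into its two assertions, reduce everything except the rank-one (\emph{elementary}) families to results already in hand, and then attack the elementary families through the classification conjecture \cite[1.1]{HLV}; the genuine difficulty --- and the reason the statement is phrased as a conjecture --- lives entirely in the elementary regime.

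First I would derive the second sentence from the first. The cyclic groups arise as $G(mp,p,1)\cong C_{m}$, and Corollary~\ref{fdthm} restricts their finite-dimensional FK Nichols algebras to $C_2,C_3,C_4$. For $n\ge 2$ with $\op{rank}Y_G\ge 2$, the Corollary following Lemma~\ref{indeclemma} makes every $\cB_G$ infinite-dimensional except for $D_2=G(2,2,2)$, whose algebra is the finite-dimensional $\cB_{S_2}\otimes\cB_{S_2}$, and $B_2=G(2,1,2)$, which is $64$-dimensional by Example~\ref{exceptions}(i). Granting that the finite-dimensional elementary algebras are exactly those of $S_2,\dots,S_5$, and using $S_2\cong C_2$, these lists combine to give precisely the eight algebras displayed; so the second sentence is pure bookkeeping once the first is known.

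For the first sentence I would begin by pinning down the elementary cases. By Lemma~\ref{ranklemma}, $\op{rank}Y_G=1$ forces $m=p$, leaving the two families $G(m,m,n)$ with $n\ge 3$ and the odd dihedral groups $I_2(m)=G(m,m,2)$ with $m$ odd; in each case $Y_G=V_0$ is the absolutely simple module on the single conjugacy class of order-two reflections $\theta^k(ij)$, carrying the sign-valued cocycle $\lambda$ computed above. The key move is to read off the underlying braided rack: by \eqref{conj5} the elements fixing a pair $(i,j)$ obey $\theta^k(ij)\triangleright\theta^l(ij)=\theta^{2k-l}(ij)$, so the support is a class of involutions assembled from dihedral blocks of size $m$, and finite-dimensionality of $\cB_G=\cB(V_0)$ becomes a question about this rack-with-cocycle. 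I would then match each such rack against the finite list predicted by \cite[1.1]{HLV}: all but the transposition racks of $S_2,\dots,S_5$ are expected to be of type D (in the sense of the \cite{AFGV}-style collapse criteria underlying \emph{loc.\ cit.}), forcing $\dim\cB_G=\infty$, while the surviving small racks are identified with transposition racks via the accidental isomorphisms $G(3,3,2)\cong S_3$, $G(2,2,3)\cong S_4$, and so on, so that no genuinely new finite-dimensional algebra appears.

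The hard part --- in fact the obstruction to an unconditional proof --- is that the symmetric groups $S_n=G(1,1,n)$ are themselves elementary, so the assertion for $n\ge 6$ contains the long-standing open problem, recalled in the introduction, that $\cB_{S_n}=\cB_n$ is infinite-dimensional (cf.\ \cite{Maj2,AFGV}). Since \cite[1.1]{HLV} is itself conjectural and subsumes exactly this case, the most the strategy can yield at present is a \emph{conditional} theorem: a complete reduction of Conjecture~\ref{conjecture2} to \cite[1.1]{HLV}, equivalently to the finiteness behaviour of transposition and odd-dihedral racks, with the symmetric-group case $n\ge 6$ isolated as the single unresolved input.
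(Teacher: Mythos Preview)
Your overall architecture matches the paper's: derive the second sentence from the first by combining Corollary~\ref{fdthm} for the cyclic case with the Corollary after Lemma~\ref{indeclemma} for rank $\ge 2$, and then reduce the first sentence to \cite[Conjecture~1.1]{HLV}. This is exactly what the paper does in Remark~\ref{conjexpl}.

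There is, however, a concrete verification step you gloss over. Once you accept \cite[1.1]{HLV}, the task is not to argue that the remaining racks are ``of type D'' --- that is the mechanism \emph{behind} the HLV conjecture, not how one applies it --- but simply to check which entries of \cite[Table~9.1]{HLV} can arise as $\cB_G$ for some $G(m,p,n)$. That table contains, besides the symmetric-group examples, the Nichols algebras attached to the affine racks $\op{Aff}(5,2)$, $\op{Aff}(5,3)$, $\op{Aff}(7,3)$, $\op{Aff}(7,5)$, and you must rule these out. The paper does this by a dimension count (the only elementary $Y_G$ of dimension $5$ or $7$ come from $Dih_5=G(5,5,2)$ and $Dih_7=G(7,7,2)$, which correspond to $\op{Aff}(5,-1)$ and $\op{Aff}(7,-1)$), backed up by a MAGMA comparison of Hilbert series showing these are not isomorphic to the listed affine examples. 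Your sketch jumps straight to ``all but the transposition racks of $S_2,\dots,S_5$ are expected to be of type D,'' which neither follows from the HLV conjecture nor substitutes for this elimination. Add that step and your reduction is complete.
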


\begin{remark}\label{conjexpl}
Conjecture~\ref{conjecture2} is derived from the above results, combined with \cite[Conjecture~1.1]{HLV}. We observe that only $G=S_2,S_3,S_4,S_5$ give elementary Nichols algebras that occur in the table \cite[Table~9.1]{HLV}. In fact, the support of $Y_G$ always generates $G$, and $A_n$ is not a complex reflection group. It remains to check that the examples associated to $\op{Aff}(5,2)$, $\op{Aff}(5,3)$, $\op{Aff}(7,3)$, and $\op{Aff}(7,5)$ occurring in the list are not of the form $\cB_G$. For this, note that the dimensions of the corresponding YD modules are 5 and 7. The only simple YD modules $Y_G$ of these dimensions belong to the dihedral groups $Dih_5=G(5,5,2)$ and $Dih_7=G(7,7,2)$. These however correspond to the affine groups $\op{Aff}(5,-1)$ and $\op{Aff}(7,-1)$, which are not isomorphic to the ones listed. In fact, MAGMA computations show that the Hilbert series are 
\begin{align}
\cH^\cE_{Dih_5}(t)&=1+5t+16t^3+45t^3+121 t^4+O(t^5), & \cH^\cE_{Dih_7}(t)&=1+7t+36t^2+175t^3+O(t^4),
\end{align}
which differ from all Hilbert series stated in \cite[Table~9.1]{HLV} in degree two already.
The groups $Dih_3=G(3,3,2)$ and $D_3=G(2,2,3)$ also give finite-dimensional FK Nichols algebras, however are isomorphic to $\cB_3$ and $\cB_4$, respectively. Hence, up to isomorphisms, $\cB_2,\cB_3,\cB_4,\cB_5$ would be the only finite-dimensional elementary FK Nichols algebras according to \cite[Conjecture~1.1]{HLV}.
\end{remark}

\begin{conjecture}\label{conjecture3}
Up to isomorphism, the only finite-dimensional Fomin--Kirillov algebras $\cE_G$ for non-exceptional indecomposable Coxeter groups are those for $G=S_2,S_3,S_4,S_5$.
\end{conjecture}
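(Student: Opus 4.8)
The plan is to reduce the statement about the quadratic algebras $\cE_G$ to the dimensionality results for the FK Nichols algebras $\cB_G$ obtained in Section~\ref{generalcase}, using that for a Coxeter group $G$ the Nichols algebra $\cB_G$ is a graded quotient of its quadratic cover $\cE_G=T(Y_G)/(\ker\Psi_G+\ide_{Y_G\otimes Y_G})$ (as recalled in the introduction). The resulting surjection of graded algebras $\cE_G\to\cB_G$ yields $\dim\cB_G\le\dim\cE_G$, so whenever $\cB_G$ is infinite-dimensional, so is $\cE_G$. The work then consists of reading off which $\cB_G$ are finite-dimensional and isolating the one case where this passage from $\cB_G$ to $\cE_G$ is not automatic.

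First I would list the indecomposable non-exceptional Coxeter groups as the four families $A_n\cong S_{n+1}\cong G(1,1,n+1)$, $B_n\cong G(2,1,n)$, $D_n\cong G(2,2,n)$ and $I_2(m)\cong G(m,m,2)$, recording the low-rank coincidences $D_3\cong S_4$, $I_2(3)\cong S_3$ and $B_2\cong I_2(4)$, and noting that $D_2\cong S_2\times S_2$ is decomposable and so lies outside the statement. By the Corollary following Lemma~\ref{indeclemma}, $\cB_G$ is infinite-dimensional whenever $\op{rank}Y_G>1$, with the sole exceptions $B_2$ and $D_2$; combined with the rank computation of Lemma~\ref{ranklemma} this disposes of $B_n$ for $n\ge 3$ and of the dihedral groups $I_2(m)$ with $m$ even and $m\ge 6$, all of which have rank two. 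For each of these $\cE_G$ is therefore infinite-dimensional.

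The groups that remain have $\op{rank}Y_G=1$, namely the elementary cases $A_n$, $D_n$ with $n\ge 4$, and $I_2(m)$ with $m$ odd. For these I would invoke Conjecture~\ref{conjecture2}, which asserts that among the $G(m,p,n)$ the only finite-dimensional elementary $\cB_G$ are those of $S_2,S_3,S_4,S_5$; hence $\cB_G$, and therefore $\cE_G$, is infinite-dimensional for every elementary Coxeter group other than these four. The only surviving candidates are thus $S_2,S_3,S_4,S_5$ together with the rank-two group $B_2\cong I_2(4)$, for which $\cB_{B_2}$ is finite-dimensional (of dimension $64$). For the symmetric groups it is classical that $\cE_{S_n}\cong\cB_{S_n}$ is finite-dimensional for $n\le 5$, so $S_2,\dots,S_5$ genuinely contribute. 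The group $B_2$ is precisely the place where the cover is strict: the MAGMA Hilbert-series comparison of Example~\ref{exceptions}(i), together with \cite{KM}, shows that $\cE_{B_2}$ is infinite-dimensional even though $\cB_{B_2}$ is not. Discarding $B_2$ leaves exactly $S_2,S_3,S_4,S_5$, as claimed.

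The main obstacle is that this whole reduction is only conditional: it rests on Conjecture~\ref{conjecture2}, which in turn depends on \cite[Conjecture~1.1]{HLV}. The unconditional results \cite{HV1,HV2} apply only to semisimple Yetter--Drinfeld modules of rank at least two and therefore say nothing about the elementary algebras $\cB_{S_n}$ for $n\ge 6$, $\cB_{D_n}$ for $n\ge 4$, or $\cB_{I_2(m)}$ for odd $m\ge 5$. Establishing infinite-dimensionality for these elementary Nichols algebras unconditionally is the genuinely hard part; in particular $\dim\cB_{S_n}\le\dim\cE_{S_n}$ shows that it would yield the classical Fomin--Kirillov/Majid assertion $\dim\cE_{S_n}=\infty$ for $n\ge 6$ from \cite{FK,Maj2}, which is still open.
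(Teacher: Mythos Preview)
Your reduction is exactly the one the paper has in mind. Note, however, that the statement in question is a \emph{conjecture}, not a theorem: the paper does not prove it, and the surrounding discussion (Remark~\ref{conjexpl} and Remark~\ref{concluding}) only explains why the unconditional results of \cite{HV1,HV2} and Example~\ref{exceptions} leave just the elementary cases $G(m,m,n)$ with $n\geq 3$ or $m$ odd to be checked, and why these would follow from Conjecture~\ref{conjecture2} (equivalently, from \cite[Conjecture~1.1]{HLV}). Your write-up recovers precisely this: the rank-$\geq 2$ Coxeter groups are disposed of by the Corollary after Lemma~\ref{indeclemma}, the exceptional survivor $B_2\cong I_2(4)$ is handled via Example~\ref{exceptions}(i) and \cite{KM}, and the remaining elementary cases are reduced to Conjecture~\ref{conjecture2} via the surjection $\cE_G\twoheadrightarrow\cB_G$. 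You also correctly flag that this is conditional and that the unconditional statement for $S_n$, $n\geq 6$, would settle the original Fomin--Kirillov/Majid question. So your proposal and the paper agree entirely; there is simply no proof in the paper to compare against beyond the informal argument you have reconstructed.
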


\begin{remark}\label{concluding}
The only cases left to check in Conjectures \ref{conjecture2} and \ref{conjecture3} are among the groups $G(m,m,n)$, where $n\geq 3$ or $m$ odd. Recall that the algebra $\cB_{B_2}$ is finite-dimensional, but $\cE_{B_2}$ is not.
\end{remark}


\section*{Acknowledgements}
The author likes to thank Yuri Bazlov for introducing him to Fomin--Kirillov algebras. Some of this work was conducted during a visit of the author at University of Marburg, and the author is grateful for the hospitality offered by the Department of Mathematics. Special thanks go to thank Istv\'an Heckenberger for teaching the author about Weyl groupoids and providing many helpful insights and guidance which made this work possible. The visit to Marburg was supported by the DFG Schwerpunkt ``Darstellungstheorie". The author is further supported by an EPSRC Doctoral Prize at UEA.

\bibliography{biblio}
\bibliographystyle{amsrefs}

\end{document}